\newtheorem{Theorem}{Theorem}[section]
\newtheorem{Lemma}[Theorem]{Lemma}
\newtheorem{Corollary}[Theorem]{Corollary}
\newtheorem{Proposition}[Theorem]{Proposition}
\newtheorem{Remark}[Theorem]{Remark}
\newtheorem{Example}[Theorem]{Example}
\newtheorem{Definition}[Theorem]{Definition}
\newtheorem{notation}[Theorem]{Notation}
\newtheorem{Question}[Theorem]{Question}
\newtheorem{Conjecture}[Theorem]{Conjecture}
\newcommand{\M}{\mathfrak{m}}
\def\sqr#1#2{{\vcenter{\hrule height.#2pt
\hbox{\vrule width.#2pt height#1pt \kern#1pt \vrule width.#2pt}
\hrule height.#2pt}}}
\def\qed{\hspace*{\fill} $\square$}
\begin{document}

\title[Bounds on Dao numbers and applications]{Bounds on Dao numbers and applications to regular local rings}

\author[A. Ficarra]{Antonino Ficarra}
\address{Department of Mathematics and Computer Sciences, Physics
and Earth Sciences, University of Messina, Viale Ferdinando Stagno d’Alcontres 31, 98166 Messina, Italy.}
\email{antficarra@unime.it}

\author[C. B. Miranda-Neto]{Cleto B.~Miranda-Neto}
\address{Departamento de Matem\'atica, Universidade Federal da
Para\'iba, 58051-900 Jo\~ao Pessoa, PB, Brazil.}
\email{cleto@mat.ufpb.br}

\author[D. S. Queiroz]{Douglas S.~Queiroz}
\address{Departamento de Matem\'atica, Universidade Federal da
Para\'iba, 58051-900 Jo\~ao Pessoa, PB, Brazil.}
\email{douglassqueiroz0@gmail.com}

\subjclass[2020]{Primary: 13C05, 13C13; Secondary: 13A30, 13H99}
\keywords{Full ideal, Castelnuovo-Mumford regularity, Rees algebra, reduction number, regular local ring}

\begin{abstract} The so-called Dao numbers are a sort of measure of the asymptotic behaviour of full properties of certain product ideals in a Noetherian local ring $R$ with infinite residue field and positive depth. In this paper, we answer a question of H. Dao on how to bound such numbers. The auxiliary tools range from Castelnuovo-Mumford regularity of appropriate graded structures to reduction numbers of the maximal ideal. In particular, we substantially improve previous results (and answer questions) by the authors. Finally, as an application of the theory of Dao numbers, we provide new characterizations of when $R$ is regular; for instance, we show that this holds if and only if the maximal ideal of $R$ can be generated by a $d$-sequence (in the sense of Huneke) if and only if the third Dao number of any (minimal) reduction of the maximal ideal vanishes.
\end{abstract}

\maketitle

\section{Motivation: Dao's problem on the fullness of certain ideals}\label{prelim}

Throughout this note, by a {\it ring} we mean a commutative, Noetherian, unital ring. Let $R$ be either a local ring with residue field $K$ and maximal ideal $\M$, or a standard graded algebra over a field $K$ having a unique graded maximal ideal $\M$. We will assume throughout that $K$ is infinite and $\mathrm{depth} \ R > 0$ (i.e., $\M$ contains a non-zerodivisor), and in addition $I \subset R$ stands for an ideal which we assume to be homogeneous whenever $R$ is graded.

In this paper we focus on  the properties of $\M$-fullness, fullness, and weak $\M$-fullness (to be recalled in the next section) of certain ideals. More precisely,  we are interested in the so-called Dao numbers of the given ideal $I$, i.e., three non-negative integers ${\mathfrak d}_i(I)$,\, 
$i=1, 2, 3$, defined below, which in some sense provide a measure for the asymptotic behaviour of the full properties of certain product ideals involving $I$. 

\begin{Definition}\rm The \textit{Dao numbers} of $I$ are defined as:
\begin{eqnarray}
{\mathfrak d}_1(I) & = & {\rm min} \{t \geq 0 \mid \mbox{$I\M^{k}$  is $\M$-full  for  all  $k \geq t$\}}; \nonumber \\
{\mathfrak d}_2(I) & =  & {\rm min} \{t \geq 0 \mid \mbox{$I\M^{k}$  is full  for  all  $k \geq t$\}}; \nonumber \\
{\mathfrak d}_3(I)  & = & {\rm min} \{t \geq 0 \mid \mbox{$I\M^{k}$  is weakly $\M$-full  for  all  $k \geq t$\}}. \nonumber
\end{eqnarray}

\end{Definition}

It is worth observing that, if $(R, \M, K)$ and $I$ are as above, then as shown in \cite[Proposition 2.2]{CD} the basic relations among the Dao numbers of $I$ are 
$${\mathfrak d}_2(I) \leq {\mathfrak d}_1(I)= {\mathfrak d}_3(I).$$

Our motivation is the following problem suggested by H. Dao, which was first addressed in \cite{CD} and then in \cite{Ficarra}. 

\begin{Question}{\rm (}\cite[Question 4.5]{Dao}{\rm )} \label{qdao} \rm Can we find good lower and upper bounds for the ${\mathfrak d}_{i}(I)$'s? 
\end{Question}

In the case where $I$ is a reduction of $\M$, a lower bound for ${\mathfrak d}_{3}(I)$ is given by ${\rm r}_I(\M)$, the reduction number of $\M$ with respect to $I$, which follows immediately from \cite[Theorem 3.4]{CD} (the question as to whether ${\rm r}_I(\M)\leq {\mathfrak d}_{2}(I)$ remains unanswered). Otherwise, for a general $I$ (not necessarily a reduction of $\M$), a more elaborated lower bound was established in \cite[Proposition 1.8]{Ficarra}.

So, in the present paper, our main goal is to answer the upper bound part of Dao's question, by using two fundamental numerical invariants in commutative algebra: the Castelnuovo-Mumford regularity (of appropriate graded structures) and, again, the reduction number. The connection between these numbers and Dao's problem was first exploited in \cite{CD} and later developed even further (concerning specifically the Castelnuovo-Mumford regularity) in \cite{Ficarra}. The present work establishes, in fact, generalizations and substantial improvements of the results given in these two papers. Additionally, we shall finally derive new characterizations of regular local rings.

\section{Auxiliary notions and basic properties}

In this section, we invoke some basic concepts and facts which we shall freely use in this note (without explicit mention).

\subsection{Full properties of ideals} Let $K$ be an infinite field and $(R, \M)$ be either a local ring with residue field $K$ or a standard graded $K$-algebra having a unique homogeneous maximal ideal $\M$. Assume  $\mathrm{depth} \ R > 0$, and let $I \subset R$ stand for an ideal (homogeneous whenever $R$ is graded).

\begin{Definition}\rm The following notions are central in this paper:
\begin{enumerate}
    \item[(a)] $I$ is {\it $\M$-full} if $I\M : x = I$ for some element $x \in \M \setminus {\M}^2$;
    \item[(b)] $I$ is {\it full} if $I : x = I : \M$ for some element $x \in \M \setminus {\M}^2$;
    \item[(c)] $I$ is {\it weakly $\M$-full} if $I\M : \M = I$.
\end{enumerate}
\end{Definition}

For completeness, we recall a few interesting properties. It is clear that $\M$-full ideals are weakly $\M$-full. If $I$ is $\M$-primary, then $I$ is weakly $\M$-full if and only if $I$ is basically full in the sense of \cite{HRR}. Moreover, $\M$-full ideals satisfy the so-called {\it Rees property}, and if $R$ is a normal domain then any integrally closed ideal is $\M$-full;  see \cite{Watanabe1} (also \cite{GH}).

\subsection{Castelnuovo-Mumford regularity} Let $S = \bigoplus_{m \geq 0}S_{m}$ be a finitely generated standard graded algebra over a ring $S_{0}$. As usual, by {\it standard} we mean that $S$ is generated by $S_1$ as an $S_0$-algebra. We write $S_{+} = \bigoplus_{m \geq 1}S_{m}$ for the ideal generated by all elements of $S$ of positive degree. 
For a graded $S$-module $A=\bigoplus_{m \in {\mathbb Z}}A_{m}$ satisfying $A_m=0$ for all $m\gg 0$, we let $a(A) = \textrm{max}\{m\in {\mathbb Z} \ | \ A_{m} \neq 0\}$ if $A\neq 0$, and $a(A)=-\infty$ if $A=0$.

Now, for a finitely generated graded $S$-module $N\neq 0$ and an integer $j\geq 0$, we take $A=H_{S_{+}}^{j}(N)$
and use the notation $$a_{j}(N) \, := \, a(H_{S_{+}}^{j}(N)),$$
where $H_{S_{+}}^{j}(-)$ stands for the $j$-th local cohomology functor with respect to the ideal $S_{+}$. It is known that $H_{S_{+}}^{j}(N)$ is a graded module with $H_{S_{+}}^{j}(N)_n=0$ for all $n\gg 0$ (see, e.g., \cite[Proposition 15.1.5(ii)]{B-S}). Thus, $a_{j}(N)<+\infty$.

\begin{Definition}\rm Maintain the above setting and notations. The \textit{Castelnuovo-Mumford regularity} of $N$ is defined as
$$\mathrm{reg}_SN \, := \, \mathrm{max}\{a_j(N) + j \, \mid \, j \geq 0\}.$$
\end{Definition}

It is well-known that $\mathrm{reg}\,N$ governs the complexity of the graded structure of $N$ and is relevant in commutative algebra and algebraic geometry, for example in the study of degrees of syzygies over polynomial rings (see, e.g., \cite[Chapter 15]{B-S}).

\begin{Remark}\rm A classical instance of interest is when $S=\mathcal{R}(J)$, the Rees algebra of an ideal $J$ in a ring $R$ (to be recalled in the next subsection), which is known to be a finitely generated standard graded $R$-algebra. In particular, we can consider the case where $R$ is local and $J=\M$, the maximal ideal of $R$.
\end{Remark}

We recall below a few basic rules about this invariant. For details, we refer to  \cite[p.\,277, (a), (c) and (d)]{B-C-R-V}, \cite[Corollary 20.19]{Eisenbud} and \cite[Lemma 3.1]{HT}.

\smallskip
\begin{itemize}
    \item[(i)] As usual, given an integer $j$, we denote by $N(j)$ the module $N$ with degrees shifted by $j$, that is, $N(j)_i = N_{i + j}$ for all $i$. Then, $$\mathrm{reg}_{S}N(j) = \mathrm{reg}_{S}N - j.$$

    \item[(ii)] Let $0 \rightarrow M \rightarrow N \rightarrow P \rightarrow 0$ be a short exact sequence of finitely generated graded $S$-modules. Then: 
    \begin{itemize}
        \item[$\bullet$] $\mathrm{reg}_{S}N \leq \mathrm{max}\{\mathrm{reg}_{S}M, \mathrm{reg}_{S}P\},$ with equality if $\mathrm{reg}_{S}P\neq \mathrm{reg}_{S}M - 1$ or  $M_{k} = 0$ for $k \gg 0$.
        \item[$\bullet$] $\mathrm{reg}_{S}M \leq \mathrm{max}\{\mathrm{reg}_{S}N, \mathrm{reg}_{S}P + 1\},$ with equality if $\mathrm{reg}_{S}N \neq \mathrm{reg}_{S}P.$ 
        \item[$\bullet$] $\mathrm{reg}_{S}P \leq \mathrm{max}\{\mathrm{reg}_{S}N, \mathrm{reg}_{S}M - 1\},$ with equality  if $\mathrm{reg}_{S}N\neq \mathrm{reg}_{S}M.$
    \end{itemize}

\smallskip

\item[(iii)] If $N_j = 0$ for all $j \gg 0$, then $\mathrm{reg}_{S}N = a(N)$.
\end{itemize}

\subsection{Rees algebra and Dao module}

Here, we consider some useful graded structures. Let $R$ be a ring and $J$ an ideal of $R$. 

\begin{Definition}\rm The {\it Rees algebra} of $J$ is the graded ring $$\mathcal{R}(J) =
\bigoplus_{k \geq 0} J^k = R\oplus J\oplus J^2 \oplus \cdots$$ 
    
\end{Definition}

\begin{notation}\label{extended}\rm  Given an $R$-module $M$, it is customary to write $\mathcal{R}(J, M)=\bigoplus_{k \geq 0} J^kM$ for the Rees module of $J$ relative to $M$. In this paper, if $I$ is another $R$-ideal, we will be particularly interested in $$\mathcal{R}(J, I) =
\bigoplus_{k \geq 0} IJ^k.$$ Note $\mathcal{R}(J, I) =I\mathcal{R}(J)$, the
extension of $I$ to the ring $\mathcal{R}(J)$, which is therefore a finitely generated ideal of $\mathcal{R}(J)$. \end{notation}

Now let $(R, \M)$ be as in Section \ref{prelim}. Its associated graded ring  is defined by $\mathrm{gr}_{\M}(R) = \bigoplus_{k \geq 0}\M^{k}/\M^{k + 1}$. In \cite{Ficarra}, the following graded structure is introduced for a given ideal $I$ of $R$.

\begin{Definition}\rm The \textit{Dao module} of $I$ is given by $$\mathfrak{D}_{\M}(I) = \bigoplus_{k \geq 0} \frac{I\M^{k + 1} : \M}{I\M^{k}},$$ which is a graded $\mathcal{R}(\M)$-module.
\end{Definition}

\begin{Remark}\rm The $k$th component of the Dao module vanishes if and only if the ideal $I{\M}^k$ is weakly $\M$-full. Since $$\mathfrak{D}_{\M}(I)_k = 0 \quad \mbox{for\, all} \quad
k \geq {\mathfrak d}_3(I),$$ it follows that $\mathfrak{D}_{\M}(I)$ has finite length and therefore is a finitely
generated graded $\mathcal{R}(\M)$-module, satisfying $$\mathrm{reg}_{\mathcal{R}(\M)}\,\mathfrak{D}_{\M}(I) ={\mathfrak d}_3(I) - 1$$ whenever ${\mathfrak d}_3(I)\geq 1$ (e.g., if $I$ is not weakly $\M$-full).
\end{Remark}

\subsection{Ratliff-Rush operation} Let $I$ be an ideal of a ring $R$.

\begin{Definition}\rm The \textit{Ratliff-Rush closure} $\widetilde{I}$
of the ideal $I$ is given by
$$\widetilde{I}  =  \bigcup_{m \geq 1}\,I^{m+1} : I^{m}.$$ \end{Definition}

This is an ideal of $R$ containing $I$ which in fact refines the integral closure of $I$, so that $\widetilde{I}=I$ whenever $I$ is integrally closed. For details, see
\cite{Ratliff-Rush}. 

Now suppose $I$ contains a regular element, i.e., a non-zerodivisor on $R$. Then it is well-known that $\widetilde{I}$ is the largest ideal that shares with $I$ the same sufficiently high powers; hence, $$\widetilde{I^{m}} = I^{m} \quad  \mbox{for\, all} \quad m \gg 0.$$ This enables us to consider the following helpful number (inspired by \cite[Proposition 4.2]{Rossi-Swanson}).

\begin{notation}\rm
If $I$ contains a regular element, we set $$s(I)  =  \mathrm{min}\,\{n \ge1 \, \mid \, \widetilde{I^{i}} = I^{i} \ \mathrm{for \ all} \ i \geq n \}.$$
\end{notation}

\begin{Remark}\label{Rem:depth}\rm Let us invoke a couple of useful properties. First, according to \cite[Lemma 2.2]{Mafi} we can write $$\widetilde{I^{k + 1}} : I = \widetilde{I^{k}} \quad \mbox{for \,all} \quad k \geq 0 .$$ Moreover, if $\mathrm{gr}_{I}(R) = \bigoplus_{k \geq 0}I^{k}/I^{k + 1}$ denotes the associated graded ring of $I$, then by \cite[Remark 1.6]{Rossi-Swanson} we get that $\widetilde{I^{k}} = I^{k}$ for all $k\geq 0$ (i.e., $s(I)=1$) if and only if $$\mathrm{depth}\,\mathrm{gr}_{I}(R) > 0.$$ \end{Remark}

\subsection{Reduction number} One last auxiliary notion is in order. 

\begin{Definition}\rm Let $J$ be a proper ideal of a ring $R$. An ideal $I \subset J$ is said to be a \textit{reduction of $J$} if $IJ^{r} = J^{r + 1}$ for some integer $r\geq 0$. Such a reduction $I$ is \textit{minimal} if it is minimal with respect to inclusion.  If $I$ is a reduction of $J$, we define the \textit{reduction number of $J$ with respect to $I$} as the number
$${\rm r}_{I}(J) =  \mathrm{min}\,\{m \in \mathbb{N} \, \mid \, IJ^{m} = J^{m + 1}\},$$ and the \textit{reduction number of $J$} as
$${\rm r}(J) =  \mathrm{min}\,\{{\rm r}_{I}(J) \, \mid \, \mbox{$I$ is a minimal reduction of $J$}\}.$$ \end{Definition}

Of special interest in this paper will be the case where $(R, \M)$ is a local ring and $J=\M$.

\section{Upper bounds on Dao numbers via Castelnuovo-Mumford regularity}

Before establishing the results of this section, we fix a piece of notation. For a graded $R$-module $M = \bigoplus_{k \geq 0} M_k$ and an integer $\ell \geq 0$, we can consider the truncation $M_{\geq \ell} = \bigoplus_{k \geq \ell} M_k$.
Specifically, using Notation \ref{extended} and assuming $(R,\M,K)$ is a local ring or a standard graded $K$-algebra, we will be interested in the truncation $$\mathcal{R}(\M, I)_{\geq 1}=
\bigoplus_{k \geq 1} I{\M}^k.$$

Here we are focused on tackling the upper bound part of Question \ref{qdao} in terms of the Castelnuovo-Mumford regularity of appropriate graded structures. The first result in this direction, in  case $R$ is local and $I$ is a reduction of $\M$, was proved in \cite{CD} and can be stated as follows.

\begin{Theorem}$($\cite[Theorem 3.10]{CD}$)$\label{CD-reg}
	Let $(R, \M, K)$ be a local ring with infinite residue field and ${\rm depth}\,R>0$, and let $I$ be a reduction of $\M$. Then, 
	$${\mathfrak d}_3(I) \leq \mathrm{reg}_{\mathcal{R}(\M)}\,\mathcal{R}(\M).$$
	
\end{Theorem}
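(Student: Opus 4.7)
The plan is to bound $\mathfrak{d}_3(I)$ via two more classical invariants---the reduction number ${\rm r}_I(\M)$ and the Ratliff-Rush stabilization index $s(\M)$---each of which is dominated by $\mathrm{reg}_{\mathcal{R}(\M)}\mathcal{R}(\M)$. The starting observation is that, since $\mathfrak{D}_\M(I)$ has finite length, one can bound its top degree $a(\mathfrak{D}_\M(I)) = \mathfrak{d}_3(I) - 1$ (assuming $\mathfrak{d}_3(I) \geq 1$, otherwise the theorem is trivial because the regularity is non-negative).

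First I would show that $\mathfrak{d}_3(I) \leq \max\{{\rm r}_I(\M),\, s(\M) - 1\}$. For $k \geq {\rm r}_I(\M)$ the reduction property gives $I\M^k = \M^{k+1}$ and $I\M^{k+1} = \M^{k+2}$, so $I\M^{k+1}:\M = \M^{k+2}:\M$. Combining the inclusion $\M^{k+2} \subseteq \widetilde{\M^{k+2}}$ with the colon identity $\widetilde{\M^{k+2}}:\M = \widetilde{\M^{k+1}}$ recorded in Remark~\ref{Rem:depth}, one obtains $\M^{k+2}:\M \subseteq \widetilde{\M^{k+1}}$. Hence the fullness condition $I\M^{k+1}:\M = I\M^k$ follows once $\widetilde{\M^{k+1}} = \M^{k+1}$, which is guaranteed for $k+1 \geq s(\M)$.

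Next I would invoke the two classical inequalities
\begin{equation*}
{\rm r}_I(\M) \leq \mathrm{reg}_{\mathcal{R}(\M)}\mathcal{R}(\M) \qquad \text{and} \qquad s(\M) - 1 \leq \mathrm{reg}_{\mathcal{R}(\M)}\mathcal{R}(\M).
\end{equation*}
The first, for a general reduction $I$, follows from the monotonicity ${\rm r}_I(\M) \leq {\rm r}_J(\M)$ for a minimal reduction $J \subseteq I$ (available since $K$ is infinite) combined with the Trung--Vasconcelos-type bound for minimal reductions. The second reflects the fact that the graded ``Ratliff-Rush defect'' $\bigoplus_k \widetilde{\M^k}/\M^k$ is naturally identifiable with a piece of $H^1_{\mathcal{R}(\M)_+}(\mathcal{R}(\M))$, whose top degree is controlled by the Castelnuovo-Mumford regularity. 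Chaining the inequalities yields $\mathfrak{d}_3(I) \leq \mathrm{reg}_{\mathcal{R}(\M)}\mathcal{R}(\M)$.

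The main obstacle is establishing the two classical inequalities above; although well-known, they rest on non-trivial structural results about Rees algebras and their local cohomology. An alternative and more self-contained route is to embed a shifted copy of the Dao module directly into an explicit quotient of the Rees algebra: one checks that $I\M^k : \M \subseteq \M^k$ at the critical degree (using a superficial element of $\M$ afforded by the depth hypothesis) yields a graded injection $\mathfrak{D}_\M(I)(-1) \hookrightarrow \mathcal{R}(\M)/I\mathcal{R}(\M)(-1)$ whose image lies in $H^0_{\mathcal{R}(\M)_+}$, and then one bounds the regularity of the target by $\mathrm{reg}_{\mathcal{R}(\M)}\mathcal{R}(\M)$ by applying the standard regularity rule to the short exact sequence $0 \to I\mathcal{R}(\M)(-1) \to \mathcal{R}(\M) \to \mathcal{R}(\M)/I\mathcal{R}(\M)(-1) \to 0$. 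The subtle point there is the bound $\mathrm{reg}_{\mathcal{R}(\M)}(I\mathcal{R}(\M)) \leq \mathrm{reg}_{\mathcal{R}(\M)}(\mathcal{R}(\M))$ for the ideal $I\mathcal{R}(\M)$, which is generated in degree $0$.
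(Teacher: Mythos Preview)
Your primary approach is correct and essentially reproduces the route taken in the original paper \cite{CD}: establish $\mathfrak{d}_3(I)\le\max\{{\rm r}_I(\M),\,s(\M)-1\}$ (this is exactly Theorem~\ref{CD-thm}, cited here from \cite{CD}) and then bound each term by $\mathrm{reg}_{\mathcal{R}(\M)}\mathcal{R}(\M)$ via the classical results of Trung and of Rossi--Dinh--Trung. The present paper, however, obtains Theorem~\ref{CD-reg} by a different path: it first proves the more general bound $\mathfrak{d}_3(I)\le\mathrm{reg}_{\mathcal{R}(\M)}\mathcal{R}(\M,I)$ for an \emph{arbitrary} ideal $I$ (Theorem~\ref{antonino1.3}, using the identification of $0:_{\mathcal{R}(\M)/\mathcal{R}(\M,I)}\mathcal{R}(\M)_+$ with top Koszul homology), and then specializes via Lemma~\ref{lemareg}, which gives $\mathrm{reg}_{\mathcal{R}(\M)}\mathcal{R}(\M,I)=\mathrm{reg}_{\mathcal{R}(\M)}\mathcal{R}(\M)$ when $I$ is a reduction of $\M$. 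Your argument is more elementary and tailored to the reduction case; the paper's argument is less direct but yields the stronger Theorem~\ref{antonino1.3} along the way.

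One caution about your ``alternative route'': the inequality you flag as the subtle point, $\mathrm{reg}_{\mathcal{R}(\M)}(I\mathcal{R}(\M))\le\mathrm{reg}_{\mathcal{R}(\M)}\mathcal{R}(\M)$, actually goes the \emph{wrong} way in general---Lemma~\ref{lemareg} shows $\mathrm{reg}_{\mathcal{R}(\M)}\mathcal{R}(\M)\le\mathrm{reg}_{\mathcal{R}(\M)}\mathcal{R}(\M,I)$. It happens to be an equality precisely when $I$ is a reduction, so the argument can be salvaged here, but as written that paragraph would not stand on its own.
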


Later, in \cite{Ficarra}, the following general answer to Question \ref{qdao} was provided.

\begin{Theorem}$($\cite[Theorem 1.1]{Ficarra}$)$\label{antonino1.1}
    Let $(R,\M,K)$ be either a local ring or a standard graded $K$-algebra, with $K$ infinite and $\mathrm{depth}\,R > 0$. Let $I \subset R$ be an ideal $($homogeneous if $R$ is graded$)$. Then, $${\mathfrak d}_3(I) \leq \mathrm{max}\{\mathrm{reg}_{\mathcal{R}(\M)}\,\mathcal{R}(\M, I),\ 
 \mathrm{reg}_{\mathcal{R}(\M)}\,\mathcal{R}(\M, I)_{\geq 1} :_{\mathcal{R}(R)} \M\}.$$ 
\end{Theorem}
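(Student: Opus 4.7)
We may assume ${\mathfrak d}_3(I)\ge 1$; otherwise the inequality is vacuous. Under this assumption $\mathfrak{D}_\M(I)$ is nonzero and, by the Remark at the end of Section 2.3, $\mathrm{reg}_{\mathcal{R}(\M)}\mathfrak{D}_\M(I)={\mathfrak d}_3(I)-1$. Consequently the statement is equivalent to
$$\mathrm{reg}_{\mathcal{R}(\M)}\mathfrak{D}_\M(I)+1\ \leq\ \max\{\mathrm{reg}_{\mathcal{R}(\M)}\mathcal{R}(\M,I),\ \mathrm{reg}_{\mathcal{R}(\M)}(\mathcal{R}(\M,I)_{\geq 1}:_{\mathcal{R}(R)}\M)\}.$$
The plan is to realize $\mathfrak{D}_\M(I)$ as the cokernel of a short exact sequence whose outer terms control the two quantities on the right.

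The key construction I would introduce is the graded $\mathcal{R}(\M)$-module
$$\mathcal{E}\ :=\ \bigoplus_{k\geq 0}\,(I\M^{k+1}:_R\M),$$
with $\mathcal{R}(\M)$-action given by multiplication inside $R$: for $a\in\M^j$ and $f\in\mathcal{E}_k$, one has $(af)\M\subset a\cdot I\M^{k+1}\subset I\M^{j+k+1}$, so $af\in\mathcal{E}_{j+k}$. Since $I\M^k\subset I\M^{k+1}:_R\M$, there is a graded embedding $\mathcal{R}(\M,I)\hookrightarrow\mathcal{E}$ whose cokernel is, by the very definition of the Dao module, $\mathfrak{D}_\M(I)$. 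Moreover $\mathcal{E}=\mathcal{R}(\M,I)$ in every degree $k\ge{\mathfrak d}_3(I)$, whence $\mathcal{E}$ is finitely generated over $\mathcal{R}(\M)$. Applying rule (ii) (third bullet) from Section 2.2 to the short exact sequence
$$0\to\mathcal{R}(\M,I)\to\mathcal{E}\to\mathfrak{D}_\M(I)\to 0$$
yields
$$\mathrm{reg}_{\mathcal{R}(\M)}\mathfrak{D}_\M(I)\ \leq\ \max\{\mathrm{reg}_{\mathcal{R}(\M)}\mathcal{E},\ \mathrm{reg}_{\mathcal{R}(\M)}\mathcal{R}(\M,I)-1\}.$$

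To conclude, I would identify $\mathcal{E}(-1)$ with the colon module $\mathcal{R}(\M,I)_{\geq 1}:_{\mathcal{R}(R)}\M$ appearing in the statement. Viewing $\mathcal{R}(\M,I)_{\geq 1}$ as a graded submodule of $\mathcal{R}(R)=R[t]$ and computing the colon with $\M\subset R\subset R[t]$: a homogeneous element $f_kt^k$ with $f_k\in R$ lies in the colon precisely when $f_k\M$ lies in the degree-$k$ component of $\mathcal{R}(\M,I)_{\geq 1}$. For $k\geq 1$ this recovers $f_k\in I\M^k:_R\M=\mathcal{E}(-1)_k$, while for $k=0$ the depth hypothesis forces $f_0=0$. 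By rule (i) of Section 2.2 we then obtain $\mathrm{reg}_{\mathcal{R}(\M)}(\mathcal{R}(\M,I)_{\geq 1}:_{\mathcal{R}(R)}\M)=\mathrm{reg}_{\mathcal{R}(\M)}\mathcal{E}+1$, and adding $1$ to both sides of the previous inequality produces the desired bound.

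The main obstacle is precisely this last identification. It is crucial to perform the colon inside $\mathcal{R}(R)=R[t]$ and not inside $\mathcal{R}(\M)$: in the latter setting the coefficient $f_k$ would be constrained to lie in $\M^k$, giving instead the smaller module with degree-$k$ piece $(I\M^k:_R\M)\cap\M^k$, which in general does not recover $\mathcal{E}(-1)$ and would yield only a strictly weaker bound. Once this subtlety is handled, the rest is a routine application of the regularity rules recalled in Section 2.2.
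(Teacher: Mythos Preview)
Your proof is correct. The paper does not itself prove this statement---it is quoted from \cite{Ficarra}---but the short exact sequence you build,
\[
0\ \longrightarrow\ \mathcal{R}(\M,I)\ \longrightarrow\ \mathcal{E}\ \longrightarrow\ \mathfrak{D}_\M(I)\ \longrightarrow\ 0,
\]
is exactly the top exact row of the second commutative diagram the paper displays just before Lemma~\ref{Zamani}, with your $\mathcal{E}$ equal to $\mathcal{P}_\M(I)(1)$ in the paper's notation $\mathcal{P}_\M(I)=\mathcal{R}(\M,I)_{\ge1}:_{\mathcal{R}(R)}\M$. Your degree-by-degree identification $\mathcal{E}(-1)\cong\mathcal{P}_\M(I)$ (including the degree-$0$ check using $\mathrm{depth}\,R>0$) and the subsequent application of the regularity rules from Section~2.2 are exactly what that diagram encodes, so your argument is the natural one and presumably matches the proof in the cited reference.
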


\medskip

We present the following auxiliary basic lemma, which  additionally refines \cite[Proposition 1.6]{Ficarra} by establishing the relationship between the regularity of an extended ideal as in Notation \ref{extended} and that of the ambient Rees algebra $\mathcal{R}(J)$.

\begin{Lemma}\label{lemareg}
 Let $(R, \M , K)$ be either a local ring or a standard graded $K$-algebra. Let $I, J$ be ideals of $R$. Then,
$$\mathrm{reg}_{\mathcal{R}(J)}\mathcal{R}(J) \leq \mathrm{reg}_{\mathcal{R}(J)} \mathcal{R}(J,I),$$ with equality if $I$ is a reduction of $J$.
\end{Lemma}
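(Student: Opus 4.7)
The plan is to exploit the canonical short exact sequence of finitely generated graded $\mathcal{R}(J)$-modules arising from the inclusion $\mathcal{R}(J,I)=I\mathcal{R}(J)\hookrightarrow \mathcal{R}(J)$, namely
\[
0 \longrightarrow \mathcal{R}(J,I) \longrightarrow \mathcal{R}(J) \longrightarrow Q \longrightarrow 0, \qquad Q:=\mathcal{R}(J)/I\mathcal{R}(J),
\]
together with the regularity rules (i)--(iii) from Subsection 2.2. Applying item (ii), first bullet, immediately gives $\mathrm{reg}_{\mathcal{R}(J)}\mathcal{R}(J) \leq \max\{\mathrm{reg}_{\mathcal{R}(J)}\mathcal{R}(J,I),\, \mathrm{reg}_{\mathcal{R}(J)}Q\}$, so the asserted inequality reduces to bounding $\mathrm{reg}_{\mathcal{R}(J)}Q$ by $\mathrm{reg}_{\mathcal{R}(J)}\mathcal{R}(J,I)$.

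To do this, I would use the fact that $Q$ is annihilated by $I$. Choose a non-zerodivisor $a\in I$ (available under the depth hypotheses imposed on the ideals of interest in the paper); then multiplication by $a$ yields a degree-zero isomorphism $a\mathcal{R}(J)\cong \mathcal{R}(J)$ of graded $\mathcal{R}(J)$-modules, so in particular $\mathrm{reg}_{\mathcal{R}(J)} a\mathcal{R}(J)=\mathrm{reg}_{\mathcal{R}(J)}\mathcal{R}(J)$. Dissecting the chain $a\mathcal{R}(J)\subseteq \mathcal{R}(J,I)\subseteq \mathcal{R}(J)$ into the short exact sequences
\[
0\to a\mathcal{R}(J)\to \mathcal{R}(J,I)\to C\to 0 \qquad \text{and}\qquad 0\to C\to \mathcal{R}(J)/a\mathcal{R}(J)\to Q\to 0,
\]
and combining rule (ii) applied to these two sequences with the standard fact $\mathrm{reg}_{\mathcal{R}(J)}(\mathcal{R}(J)/a\mathcal{R}(J))\leq \mathrm{reg}_{\mathcal{R}(J)}\mathcal{R}(J)$, a careful comparison then yields the sharp bound $\mathrm{reg}_{\mathcal{R}(J)}Q\leq \mathrm{reg}_{\mathcal{R}(J)}\mathcal{R}(J,I)$, hence the inequality.

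For the equality under the hypothesis that $I$ is a reduction of $J$, set $r:=\mathrm{r}_I(J)$. Since $I\subseteq J$ we have $IJ^k\subseteq J^{k+1}$ for every $k\geq 0$, and this produces a second short exact sequence
\[
0\longrightarrow \mathcal{R}(J,I)\longrightarrow \mathcal{R}(J)(1)\longrightarrow C'\longrightarrow 0,
\]
where $C'_k=J^{k+1}/IJ^k$ vanishes for all $k\geq r$ by the definition of the reduction number. Hence $C'$ has finite graded support and by item (iii), $\mathrm{reg}_{\mathcal{R}(J)} C'=a(C')\leq r-1$. Item (ii), second bullet, combined with the shift rule (i) then gives $\mathrm{reg}_{\mathcal{R}(J)}\mathcal{R}(J,I)\leq \max\{\mathrm{reg}_{\mathcal{R}(J)}\mathcal{R}(J)-1,\,r\}$. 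Combining this with the inequality from the first part and the classical bound $\mathrm{reg}_{\mathcal{R}(J)}\mathcal{R}(J)\geq \mathrm{r}_I(J)$ valid when $I$ is a reduction of $J$, this forces the equality $\mathrm{reg}_{\mathcal{R}(J)}\mathcal{R}(J,I)=\mathrm{reg}_{\mathcal{R}(J)}\mathcal{R}(J)$.

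The hard part is the first step: a direct application of rule (ii), third bullet, to the initial short exact sequence yields only $\mathrm{reg}_{\mathcal{R}(J)}Q\leq \max\{\mathrm{reg}_{\mathcal{R}(J)}\mathcal{R}(J),\,\mathrm{reg}_{\mathcal{R}(J)}\mathcal{R}(J,I)-1\}$, which is not enough to preclude $\mathrm{reg}_{\mathcal{R}(J)}\mathcal{R}(J)>\mathrm{reg}_{\mathcal{R}(J)}\mathcal{R}(J,I)$. The depth-based detour through the non-zerodivisor $a\in I$ and the isomorphism $a\mathcal{R}(J)\cong \mathcal{R}(J)$ is the crucial input allowing one to tighten the bound, and making that comparison work cleanly is where the delicacy of the proof lies.
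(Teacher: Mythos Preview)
Your argument for the inequality has a genuine gap. You correctly identify that the issue is bounding $\mathrm{reg}_{\mathcal{R}(J)}Q$ by $\mathrm{reg}_{\mathcal{R}(J)}\mathcal{R}(J,I)$, but the ``careful comparison'' you sketch via the non-zerodivisor $a\in I$ does not deliver this. Chasing rule~(ii) through your two auxiliary sequences yields only
\[
\mathrm{reg}_{\mathcal{R}(J)}Q \ \le\ \max\{\mathrm{reg}_{\mathcal{R}(J)}(\mathcal{R}(J)/a\mathcal{R}(J)),\ \mathrm{reg}_{\mathcal{R}(J)}C-1\}\ \le\ \max\{\mathrm{reg}_{\mathcal{R}(J)}\mathcal{R}(J),\ \mathrm{reg}_{\mathcal{R}(J)}\mathcal{R}(J,I)-1\},
\]
which is precisely the circular bound you were trying to escape: the term $\mathrm{reg}_{\mathcal{R}(J)}\mathcal{R}(J)$ reappears on the right, so nothing has been gained over the direct application of rule~(ii), third bullet. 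Moreover, the detour imposes the extraneous hypothesis that $I$ contain a non-zerodivisor, which the lemma does not assume.

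The paper's proof avoids all of this. Since $\mathcal{R}(J,I)$ is a homogeneous ideal of $\mathcal{R}(J)$, one has the standard identity
\[
\mathrm{reg}_{\mathcal{R}(J)}\bigl(\mathcal{R}(J)/\mathcal{R}(J,I)\bigr)\ =\ \mathrm{reg}_{\mathcal{R}(J)}\mathcal{R}(J,I)\ -\ 1,
\]
so that rule~(ii), first bullet, applied to the very same short exact sequence you wrote down gives
\[
\mathrm{reg}_{\mathcal{R}(J)}\mathcal{R}(J)\ \le\ \max\{\mathrm{reg}_{\mathcal{R}(J)}\mathcal{R}(J,I),\ \mathrm{reg}_{\mathcal{R}(J)}\mathcal{R}(J,I)-1\}\ =\ \mathrm{reg}_{\mathcal{R}(J)}\mathcal{R}(J,I)
\]
in one step. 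The missing idea in your approach is exactly this ideal-versus-quotient regularity identity; once you invoke it, there is nothing left to do.

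Your treatment of the equality case (when $I$ is a reduction of $J$) is essentially the argument the paper refers to from \cite[Proposition~1.6]{Ficarra}, and is fine in outline; just note that the shifted module should be $\mathcal{R}(J)_{+}(1)$ rather than $\mathcal{R}(J)(1)$ to make the inclusion $IJ^k\subseteq J^{k+1}$ sit in non-negative degrees.
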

\begin{proof}
    Since $\mathcal{R}(J,I)$ is a homogeneous ideal of $\mathcal{R}(J)$, the short exact sequence $$0 \rightarrow \mathcal{R}(J,I) \rightarrow \mathcal{R}(J) \rightarrow \frac{\mathcal{R}(J)}{\mathcal{R}(J,I)} \rightarrow 0$$ yields \begin{eqnarray}
        \mathrm{reg}_{\mathcal{R}(J)}\mathcal{R}(J) &\leq& \mathrm{max}\Big\{\mathrm{reg}_{\mathcal{R}(J)} \mathcal{R}(J,I), \mathrm{reg}_{\mathcal{R}(J)} \frac{\mathcal{R}(J)}{\mathcal{R}(J,I)} \Big\} \nonumber \\ ~ &=& \mathrm{max}\Big\{\mathrm{reg}_{\mathcal{R}(J)} \mathcal{R}(J,I), \mathrm{reg}_{\mathcal{R}(J)} \mathcal{R}(J,I) - 1 \Big\} \nonumber \\ ~ & = & \mathrm{reg}_{\mathcal{R}(J)} \mathcal{R}(J,I). \nonumber
    \end{eqnarray} For the equality part, the proof of $\mathrm{reg}_{\mathcal{R}(J)}\mathcal{R}(J) \geq \mathrm{reg}_{\mathcal{R}(J)} \mathcal{R}(J,I)$ is exactly the same as the one given in \cite[Proposition 1.6]{Ficarra} for the case $J=\M$. \qed
\end{proof}

\medskip

Our main result establishes \cite[Conjecture 0.1]{Ficarra} in its full generality, as follows.

\begin{Theorem}\label{antonino1.3}
    Let $R$ and $I$ be as in Theorem \ref{antonino1.1}. Then, $${\mathfrak d}_3(I) \leq \mathrm{reg}_{\mathcal{R}(\M)}\,\mathcal{R}(\M, I).$$ 
\end{Theorem}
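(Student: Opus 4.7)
The plan is to set up a short exact sequence having $\mathfrak{D}_\M(I)$ as the rightmost term and extract the desired bound from local cohomology rather than from a direct regularity estimate on auxiliary modules. I would introduce the graded $\mathcal{R}(\M)$-submodule
\[ M \,=\, \mathcal{R}(\M, I) :_{\mathcal{R}(R)} \mathcal{R}(\M)_+ \,\subset\, \mathcal{R}(R), \]
whose $k$-th graded piece is $M_k = I\M^{k+1} :_R \M$. Since $I\M^k \subset I\M^{k+1}:\M$ (because $(I\M^k)\M = I\M^{k+1}$), we have $\mathcal{R}(\M, I) \subset M$, and the quotient is precisely the Dao module, producing the short exact sequence
\[ 0 \,\to\, \mathcal{R}(\M, I) \,\to\, M \,\to\, \mathfrak{D}_\M(I) \,\to\, 0. \]
This is a natural refinement of the auxiliary object appearing in the proof of Theorem \ref{antonino1.1}; the point is to bypass a direct bound on $\mathrm{reg}_{\mathcal{R}(\M)}\,M$ and pass to local cohomology with respect to $\mathcal{R}(\M)_+$.

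The crucial step is to verify $H^0_{\mathcal{R}(\M)_+}(M) = 0$. I would argue as follows: each graded piece $M_k$ sits inside $R$, and multiplication by any non-zerodivisor $x \in \M$ (whose existence is granted by the depth hypothesis), regarded as an element of $\mathcal{R}(\M)_1$, acts on $M_k$ as multiplication in $R$, hence injectively. Therefore no nonzero element of $M$ can be annihilated by a power of $\mathcal{R}(\M)_+$. Combined with the observation (already implicit in the \emph{Remark} on the Dao module) that $\mathcal{R}(\M)_+$ annihilates $\mathfrak{D}_\M(I)$, so that $H^0_{\mathcal{R}(\M)_+}(\mathfrak{D}_\M(I)) = \mathfrak{D}_\M(I)$ and $H^j_{\mathcal{R}(\M)_+}(\mathfrak{D}_\M(I)) = 0$ for $j \geq 1$, the long exact sequence in local cohomology attached to the displayed short exact sequence collapses to an injection
\[ \mathfrak{D}_\M(I) \,\hookrightarrow\, H^1_{\mathcal{R}(\M)_+}(\mathcal{R}(\M, I)). \]

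From this embedding the conclusion is essentially immediate. By rule (iii) applied to the finite-length module $\mathfrak{D}_\M(I)$, one has $\mathrm{reg}_{\mathcal{R}(\M)}\mathfrak{D}_\M(I) = a(\mathfrak{D}_\M(I))$, while the injection yields $a(\mathfrak{D}_\M(I)) \leq a_1(\mathcal{R}(\M, I))$; by definition of Castelnuovo--Mumford regularity, $a_1(\mathcal{R}(\M, I)) + 1 \leq \mathrm{reg}_{\mathcal{R}(\M)}\mathcal{R}(\M, I)$. Since $\mathrm{reg}_{\mathcal{R}(\M)}\mathfrak{D}_\M(I) = \mathfrak{d}_3(I) - 1$ when $\mathfrak{d}_3(I) \geq 1$ (and the case $\mathfrak{d}_3(I) = 0$ is trivial), chaining these inequalities produces $\mathfrak{d}_3(I) \leq \mathrm{reg}_{\mathcal{R}(\M)}\mathcal{R}(\M, I)$. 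The main obstacle, I expect, is conceptual rather than technical: identifying the correct auxiliary module $M$ (namely the colon against the full graded ideal $\mathcal{R}(\M)_+$, rather than against $\M$ as in Theorem \ref{antonino1.1}) and recognizing that the superfluous $\mathrm{reg}_{\mathcal{R}(\M)}\,N$ term in the weaker bound is absorbed once $\mathfrak{D}_\M(I)$ is realized directly inside $H^1_{\mathcal{R}(\M)_+}(\mathcal{R}(\M, I))$.
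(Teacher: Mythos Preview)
Your argument is correct and is genuinely different from --- and cleaner than --- the paper's own proof. The paper proceeds by a case distinction on whether $t:=\mathfrak{d}_3(I)-1$ satisfies $t\geq s(\M)$ or $t<s(\M)$. In the first case it invokes Ratliff--Rush theory to force $I\M^{t+1}:\M\subset\M^t$, then works inside the quotient $\mathcal{R}(\M)/\mathcal{R}(\M,I)$ and uses the Koszul-homology description of regularity to locate a nonvanishing socle element in degree $t$; in the second case it falls back on the bound $s(\M)\leq\mathrm{reg}_{\mathcal{R}(\M)}\mathcal{R}(\M)$ from \cite{Rossi-Dinh-Trung} together with Lemma~\ref{lemareg} and the previously established special case $\mathrm{depth}\,\mathrm{gr}_\M(R)>0$. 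The reason for this detour is that, working inside $\mathcal{R}(\M)$, the socle of $\mathcal{R}(\M)/\mathcal{R}(\M,I)$ in degree $k$ is only $\bigl((I\M^{k+1}:\M)\cap\M^k\bigr)/I\M^k$, and one must separately argue that the intersection with $\M^k$ is harmless at the critical degree.

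Your move of passing to the ambient module $\mathcal{R}(R)$ and taking $M=\mathcal{R}(\M,I):_{\mathcal{R}(R)}\mathcal{R}(\M)_+$ sidesteps this entirely: the quotient $M/\mathcal{R}(\M,I)$ is the Dao module on the nose, with no intersection to control. The two facts you isolate --- that $\mathcal{R}(\M)_+$ kills $\mathfrak{D}_\M(I)$ (immediate from $\M\cdot(I\M^{k+1}:\M)\subset I\M^{k+1}$) and that $H^0_{\mathcal{R}(\M)_+}(M)=0$ (since a non-zerodivisor $x\in\M$, viewed in degree one, acts injectively on $M\subset\mathcal{R}(R)$) --- then yield the embedding $\mathfrak{D}_\M(I)\hookrightarrow H^1_{\mathcal{R}(\M)_+}(\mathcal{R}(\M,I))$ via the connecting map, and the bound drops out. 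This eliminates the case split, the Ratliff--Rush input, and the appeals to Lemma~\ref{lemareg} and to \cite{Rossi-Dinh-Trung,Ficarra}. What the paper's route buys, by contrast, is an explicit link to the invariant $s(\M)$ and to the socle of $\mathcal{R}(\M)/\mathcal{R}(\M,I)$, which feeds into the subsequent discussion (e.g.\ Proposition~3.7); your argument is shorter but does not surface those auxiliary objects.
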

\begin{proof} Clearly, we may suppose $\mathfrak{d}_3(I)>0$. Set $t = \mathfrak{d}_3(I) - 1$, and let us assume first that $s(\mathfrak{m}) \leq t$. Then, $$I\mathfrak{m}^{t + 1} : \mathfrak{m} \subseteq \mathfrak{m}^{t + 1} : \mathfrak{m} = \widetilde{\mathfrak{m}^{t + 1}} : \mathfrak{m} = \widetilde{\mathfrak{m}^{t}} = \mathfrak{m}^{t}.$$

Moreover, there is a well-known isomorphism (see, for instance, \cite[p.\,268]{HHBook})
$$
H_n(\mathcal{R}(\M)_+;\mathcal{R}(\M)/\mathcal{R}(\M, I))\cong e_1\wedge\cdots\wedge e_n\,(0:_{\mathcal{R}(\M)/\mathcal{R}(\M,I)}\mathcal{R}(\M)_+),
$$
where the first module is the $n$th Koszul homology module of $\mathcal{R}(\M)_+$ with respect to $\mathcal{R}(\M)/\mathcal{R}(\M, I)$, the integer $n$ is the minimal number of generators of the irrelevant ideal $\mathcal{R}(\M)_+$, and each $e_i$ has degree one. Notice that we can write
\begin{equation}\label{eq:Soc-R-m-I}
        0:_{\mathcal{R}(\M)/\mathcal{R}(\M,I)}\mathcal{R}(\M)_+ \ =\ \bigoplus_{k\ge0}\frac{(I\M^{k+1}:\M)\cap\M^k}{I\M^k},\end{equation}
and furthermore
    \begin{equation}
        (0:_{\mathcal{R}(\M)/\mathcal{R}(\M,I)}\mathcal{R}(\M)_+)_t\ =\ \frac{(I\M^{t+1}:\M)\cap\M^t}{I\M^t}\ =\ \frac{I\M^{t+1}:\M}{I\M^t} \ =\ \mathfrak{D}_\M(I)_t \neq 0.
    \end{equation}   
Consequently, using \cite[Theorem 8.1.3]{B-C-R-V} we obtain
    \begin{align*}
        \mathfrak{d}_3(I)-1\ =\ t\ &\le\ \max\{j \mid (0:_{\mathcal{R}(\M)/\mathcal{R}(\M,I)}\mathcal{R}(\M)_+)_j\ne0\}\\
        &=\ \max\{j-n \mid H_n(\mathcal{R}(\M)_+;\mathcal{R}(\M)/\mathcal{R}(\M, I))_j\ne0\}\\
        &\le\ \max\{j-i \mid H_i(\mathcal{R}(\M)_+;\mathcal{R}(\M)/\mathcal{R}(\M, I))_j\ne0\}\\
        &=\ \textrm{reg}_{\mathcal{R}(\M)}\mathcal{R}(\M)/\mathcal{R}(\M,I)\\
        &=\ \textrm{reg}_{\mathcal{R}(\M)}\mathcal{R}(\M,I)-1.
    \end{align*}

    Next, assume that $t < s(\M)$. Since $\M$ contains a regular element, we deduce from  \cite[Proposition 2.1(ii)]{Rossi-Dinh-Trung} that $s(\M) \leq \mathrm{max}\{1, \mathrm{reg}_{\mathcal{R}(\M)}\,\mathcal{R}(\M)\}$. If $\mathrm{reg}_{\mathcal{R}(\M)}\,\mathcal{R}(\M) = 0$ then $\mathrm{depth}\,\mathrm{gr}_\M(R)>0$
 (see Remark \ref{Rem:depth}), and the result follows from \cite[Theorem 1.3]{Ficarra}. On the other hand, if  $\mathrm{reg}_{\mathcal{R}(\M)}\,\mathcal{R}(\M) \geq 1$, given that $t < s(\M)$, we conclude  $${\mathfrak d}_3(I) \leq s(\M) \leq \mathrm{reg}_{\mathcal{R}(\M)}\,\mathcal{R}(\M) \leq \mathrm{reg}_{\mathcal{R}(\M)}\,\mathcal{R}(\M, I),$$ where the last inequality is guaranteed by Lemma \ref{lemareg}. \qed
\end{proof}

\medskip

\begin{Remark}\rm  Notice that we can rediscover Theorem
\ref{CD-reg} as a direct consequence of Theorem \ref{antonino1.3} and Lemma \ref{lemareg}.
    
\end{Remark}

In our view, and in connection to Lemma \ref{lemareg}, it is worth asking the following.

\begin{Question}\label{quesred}\rm
 Let $(R, \M , K)$ be either a local ring or a standard graded $K$-algebra. Let $I\subset J$ be ideals of $R$. When does the condition
$\mathrm{reg}_{\mathcal{R}(J)}\mathcal{R}(J) = \mathrm{reg}_{\mathcal{R}(J)} \mathcal{R}(J,I)$ force $I$ to be a reduction of $J$?
\end{Question}

Note that the inclusion $$0:_{\mathcal{R}(\M)/\mathcal{R}(\M,I)}\mathcal{R}(\M)_+\ =\ \bigoplus_{k\ge0}\frac{(I\M^{k+1}:\M)\cap\M^k}{I\M^k} \ \subseteq \bigoplus_{k \geq 0} \frac{I\M^{k + 1} : \M}{I\M^{k}} \ = \ \mathfrak{D}_{\M}(I)$$ always holds, and naturally we can ask when it is actually an equality.

\begin{Proposition}
    Let $R$ and $I$ be as in Theorem \ref{antonino1.1}. If $R$ is a standard graded $K$-algebra or $\mathrm{depth}\,\mathrm{gr}_{\M}(R)>0$, then  $0:_{\mathcal{R}(\M)/\mathcal{R}(\M,I)}\mathcal{R}(\M)_+=\mathfrak{D}_\M(I)$.
\end{Proposition}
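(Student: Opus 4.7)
The key reduction is as follows. From the formula \eqref{eq:Soc-R-m-I} established during the proof of Theorem~\ref{antonino1.3}, the graded piece in degree $k$ of $0:_{\mathcal{R}(\M)/\mathcal{R}(\M,I)}\mathcal{R}(\M)_+$ is $((I\M^{k+1}:\M)\cap\M^k)/I\M^k$, while that of the Dao module is $(I\M^{k+1}:\M)/I\M^k$. One inclusion being automatic, the proof amounts to establishing
$$I\M^{k+1}:\M\ \subseteq\ \M^k\quad\text{for every } k\geq 0;$$
and since $I\M^{k+1}\subseteq\M^{k+1}$ gives $I\M^{k+1}:\M\subseteq\M^{k+1}:\M$, it suffices to prove $\M^{k+1}:\M\subseteq\M^k$ under each of the two hypotheses.

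Assume first that $\mathrm{depth}\,\mathrm{gr}_\M(R)>0$. By Remark~\ref{Rem:depth}, this is equivalent to $s(\M)=1$, i.e., $\widetilde{\M^j}=\M^j$ for all $j\geq 1$. Combined with the Ratliff-Rush identity $\widetilde{\M^{k+1}}:\M=\widetilde{\M^k}$ (also stated in that remark), this yields, for every $k\geq 1$,
$$\M^{k+1}:\M\ \subseteq\ \widetilde{\M^{k+1}}:\M\ =\ \widetilde{\M^k}\ =\ \M^k,$$
while the case $k=0$ reduces to the trivial inclusion $\M:\M\subseteq R=\M^0$. This mirrors the first step of the proof of Theorem~\ref{antonino1.3} and presents no real difficulty.

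In the standard graded case, the plan is to argue degree by degree using a non-zerodivisor sitting in degree one. Since $K$ is infinite and $\mathrm{depth}\,R>0$, a routine prime-avoidance argument on the finite list of homogeneous associated primes of $R$ --- each of which is strictly contained in $\M=R_+$ and hence meets $R_1$ in a proper $K$-subspace --- produces a non-zerodivisor $u\in R_1$. Let now $x\in \M^{k+1}:\M$ be a homogeneous element of degree $d$. If $d\geq k$, then by standardness one has $x\in R_d\subseteq \M^k$. If instead $d<k$, then $xu\in R_{d+1}\cap \M^{k+1}=0$ because the graded components of $\M^{k+1}$ vanish in all degrees below $k+1$; the non-zerodivisor property of $u$ then forces $x=0$. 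In either case $x\in\M^k$, as required.

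The step requiring care is the production of the degree-one non-zerodivisor $u$ in the standard graded setting; every other piece is essentially bookkeeping on graded components and invocation of the results recalled in Section~2. I would not expect any serious obstacle beyond this prime-avoidance argument.
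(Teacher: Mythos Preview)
Your proof is correct and follows essentially the same approach as the paper: reduce via \eqref{eq:Soc-R-m-I} to the inclusion $I\M^{k+1}:\M\subseteq\M^k$, then treat the two hypotheses separately, using the Ratliff--Rush identities for the depth condition and a degree argument in the graded case. In the graded case you are in fact more careful than the paper, which simply multiplies by $x_1+J$ and reads off a degree bound without checking that this element is nonzero or a non-zerodivisor; your prime-avoidance step producing a degree-one non-zerodivisor $u$ fills exactly that gap.
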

\begin{proof}
    In view of equation (\ref{eq:Soc-R-m-I}), it suffices to show that
    $$I\M^{k+1}:\M\ \subset\ \M^k \quad \mbox{for\, all} \quad k\geq 0.$$ First, if $R$ is a standard graded $K$-algebra, we can represent it as a quotient $S/J$ where $S=K[x_1,\dots,x_n]$ is a standard graded polynomial ring and $J\subset S$ is a homogeneous ideal. Now let $f\in I\M^{k+1}:\M$ be an homogeneous element. Then $$(x_1+J)f\in I\M^{k+1}\subset\M^{k+1}$$ and so $\deg f+1\ge k+1$. Thus $f\in\M^{k}$ and consequently $I\M^{k+1}:\M\subset\M^k$ for all $k\ge0$. Finally, if $\mathrm{depth}\,\mathrm{gr}_\M(R)>0$ then, as we know, $\widetilde{\M^{k}} = \M^{k}$ for all $k \geq 0$. It follows that $$I\M^{k + 1} : \M \subset \M^{k + 1} : \M = \widetilde{\M^{k + 1}} : \M = \widetilde{\M^{k}} = \M^{k}.$$ \qed
\end{proof}

\medskip

Next, we consider some natural blowup algebras that are related to the Dao module of $I$ and whose Castelnuovo-Mumford regularity will be compared to that of the ideal $\mathcal{R}(\M,I)\subset \mathcal{R}(\M)$.

\begin{Definition}\rm The {\it associated graded module of $\M$ relative to $I$} is the $\mathcal{R}(\M)$-module  $$\mathrm{gr}_{\M}(I)=\mathcal{R}(\M,I)/\M\mathcal{R}(\M,I)=\bigoplus_{k\ge0}I\M^k/I\M^{k+1}.$$
\end{Definition}

Note that, for each $k\ge0$, we have the following commutative diagram with exact rows and columns:
\begin{displaymath}
    \xymatrix {
    &0\ar[d]&0\ar[d]&&\\
    &I\M^{k+1}\ar[d]\ar@{=}[r]&I\M^{k+1}\ar[d]&&\\
    0\ar[r]&I\M^k\ar[d]\ar[r]&I\M^{k+1}:\M\ar[d]\ar[r]&\cfrac{I\M^{k+1}:\M}{I\M^k}\ar[r]\ar@{=}[d]&0\\
    0\ar[r]&\cfrac{I\M^k}{I\M^{k+1}}\ar[d]\ar[r]&\cfrac{I\M^{k+1}:\M}{I\M^{k+1}}\ar[d]\ar[r]&\cfrac{I\M^{k+1}:\M}{I\M^k}\ar[r]&0\\
    &0&0&&
    }
\end{displaymath}

\begin{notation}\rm  For simplicity, set $$\mathcal{P}_\M(I)=\mathcal{R}(\M,I)_{\ge1}:_{\mathcal{R}(R)}\M \quad \mbox{and} \quad \mathcal{Q}_\M(I)=(\mathcal{R}(\M,I)_{\ge1}:_{\mathcal{R}(R)}\M)/\mathcal{R}(\M,I)_{\ge1}.$$
\end{notation}

Taking the direct sum in the above diagram, we obtain the following commutative diagram of finitely generated graded $\mathcal{R}(\M)$-modules with exact rows and columns:
\begin{displaymath}
    \xymatrix {
    &0\ar[d]&0\ar[d]&&\\
    &\mathcal{R}(\M,I)_{\ge1}(1)\ar[d]\ar@{=}[r]&\mathcal{R}(\M,I)_{\ge1}(1)\ar[d]&&\\
    0\ar[r]&\mathcal{R}(\M,I)\ar[d]\ar[r]&\mathcal{P}_\M(I)(1)\ar[d]\ar[r]&\mathfrak{D}_\M(I)\ar[r]\ar@{=}[d]&0\\
    0\ar[r]&\mathrm{gr}_\M(I)\ar[d]\ar[r]&\mathcal{Q}_\M(I)(1)\ar[d]\ar[r]&\mathfrak{D}_\M(I)\ar[r]&0\\
    &0&0&&
    }
\end{displaymath}

The lemma below is a special case of \cite[Corollary 3]{Za}.

\begin{Lemma}\label{Zamani} There is an equality $\mathrm{reg}_{\mathcal{R}(\M)}\mathcal{R}(\M,I)=\mathrm{reg}_{\mathcal{R}(\M)}\mathrm{gr}_\M(I)$.
    
\end{Lemma}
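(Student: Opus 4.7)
The plan is to apply the regularity rules of item~(ii) in Section~2.2 to the short exact sequence furnished by the first column of the commutative diagram displayed immediately before the lemma, namely
\begin{equation*}
0 \to \mathcal{R}(\M, I)_{\geq 1}(1) \to \mathcal{R}(\M, I) \to \mathrm{gr}_\M(I) \to 0.
\end{equation*}
All three modules are finitely generated graded $\mathcal{R}(\M)$-modules, so item~(ii) is directly applicable. The only nontrivial ingredient left is a bound for the regularity of the shifted kernel term in terms of $\mathrm{reg}_{\mathcal{R}(\M)}\mathcal{R}(\M,I)$ itself, so that all the quantities involved in the rules can be compared to the two numbers we wish to equate.

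To control $\mathrm{reg}_{\mathcal{R}(\M)}\mathcal{R}(\M,I)_{\geq 1}$, I would use the auxiliary short exact sequence
\begin{equation*}
0 \to \mathcal{R}(\M, I)_{\geq 1} \to \mathcal{R}(\M, I) \to I \to 0,
\end{equation*}
in which $I$ is viewed as a graded $\mathcal{R}(\M)$-module concentrated in degree~$0$, so that $\mathrm{reg}_{\mathcal{R}(\M)} I = 0$ by item~(iii). The second bullet of item~(ii) then yields
\begin{equation*}
\mathrm{reg}_{\mathcal{R}(\M)}\mathcal{R}(\M,I)_{\geq 1} \,\leq\, \max\bigl\{\mathrm{reg}_{\mathcal{R}(\M)}\mathcal{R}(\M,I),\,1\bigr\}.
\end{equation*}

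Plugging this bound back into the first and third bullets of item~(ii) applied to the main sequence, and keeping track of the degree shift via $\mathrm{reg}(N(1)) = \mathrm{reg}(N) - 1$, the two desired inequalities
\begin{equation*}
\mathrm{reg}_{\mathcal{R}(\M)}\mathcal{R}(\M,I) \,\leq\, \mathrm{reg}_{\mathcal{R}(\M)}\mathrm{gr}_\M(I) \quad\text{and}\quad \mathrm{reg}_{\mathcal{R}(\M)}\mathrm{gr}_\M(I) \,\leq\, \mathrm{reg}_{\mathcal{R}(\M)}\mathcal{R}(\M,I)
\end{equation*}
drop out after short bookkeeping: each max on the right-hand side of the relevant rule collapses to the correct term because one of the competing entries is strictly smaller than the regularity appearing on the left.

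The main (modest) obstacle is the careful tracking of the degree shift on $\mathcal{R}(\M,I)_{\geq 1}(1)$ together with a couple of small-regularity edge cases that arise when comparing maxima. These are dispatched by the observation that both $\mathcal{R}(\M,I)$ and $\mathrm{gr}_\M(I)$ are generated in degree~$0$, which forces their regularities to be non-negative whenever they are non-zero; the degenerate case $I = 0$ (where $\mathrm{gr}_\M(I) = 0$ by Nakayama's lemma, since $\M$ contains a non-zerodivisor) is trivial.
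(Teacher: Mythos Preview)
Your argument is correct. The paper itself does not supply a proof of this lemma at all: it simply records the statement as a special case of \cite[Corollary~3]{Za}. Your approach, by contrast, is self-contained within the paper, using only the short exact sequence from the first column of the diagram immediately preceding the lemma, the auxiliary truncation sequence, and the regularity rules (i)--(iii) already recalled in Section~2.2. In effect you are reproving the relevant instance of Zamani's result rather than quoting it, which makes the paper more self-contained at the cost of a short bookkeeping paragraph. One small point: your sentence ``each max \ldots\ collapses to the correct term because one of the competing entries is strictly smaller'' is not literally true in the edge case $\mathrm{reg}_{\mathcal{R}(\M)}\mathcal{R}(\M,I)=0$; there the strict inequality can fail, and you instead need the observation (which you do make) that $\mathrm{reg}_{\mathcal{R}(\M)}\mathrm{gr}_\M(I)\geq 0$ because $\mathrm{gr}_\M(I)$ is nonzero and generated in degree~$0$. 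With that caveat, the two inequalities go through exactly as you outline.
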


Now we obtain the following lower bound for $\mathrm{reg}_{\mathcal{R}(\M)}\mathcal{R}(\M,I)$.

\begin{Corollary}
    Let $R$ and $I$ be as in Theorem \ref{antonino1.1}. Then,
    $$\mathrm{reg}_{\mathcal{R}(\M)}\mathcal{R}(\M,I)\geq
    {\rm max}\{\mathrm{reg}_{\mathcal{R}(\M)}\mathcal{P}_\M(I),\, \mathrm{reg}_{\mathcal{R}(\M)}\mathcal{Q}_\M(I)\}\ -1.
    $$
    In particular,
    $$
    \mathrm{reg}_{\mathcal{R}(\M)}\mathcal{P}_\M(I)=\mathrm{reg}_{\mathcal{R}(\M)}\mathcal{Q}_\M(I)=\mathrm{reg}_{\mathcal{R}(\M)}\mathcal{R}(\M,I)+1
    $$
    if either $\mathfrak{d}_3(I)=0$ or $\mathrm{reg}_{\mathcal{R}(\M)}\mathcal{R}(\M,I)>\mathfrak{d}_3(I)>0$.
\end{Corollary}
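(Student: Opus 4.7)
The strategy is to read off both assertions from the two horizontal short exact sequences displayed in the second commutative diagram above, namely
\begin{equation*}
0\rightarrow\mathcal{R}(\M,I)\rightarrow\mathcal{P}_\M(I)(1)\rightarrow\mathfrak{D}_\M(I)\rightarrow 0
\end{equation*}
and
\begin{equation*}
0\rightarrow\mathrm{gr}_\M(I)\rightarrow\mathcal{Q}_\M(I)(1)\rightarrow\mathfrak{D}_\M(I)\rightarrow 0,
\end{equation*}
by feeding them into the standard rules (i) and (ii) of Subsection~2.2 describing how Castelnuovo-Mumford regularity behaves under a degree shift and in a short exact sequence. The input facts I shall use are: Lemma~\ref{Zamani} (giving $\mathrm{reg}_{\mathcal{R}(\M)}\mathrm{gr}_\M(I)=\mathrm{reg}_{\mathcal{R}(\M)}\mathcal{R}(\M,I)$); the identities $\mathfrak{D}_\M(I)=0$ when $\mathfrak{d}_3(I)=0$ and $\mathrm{reg}_{\mathcal{R}(\M)}\mathfrak{D}_\M(I)=\mathfrak{d}_3(I)-1$ when $\mathfrak{d}_3(I)\ge1$ (as recorded in the Remark after the definition of the Dao module); and the upper bound $\mathfrak{d}_3(I)\le\mathrm{reg}_{\mathcal{R}(\M)}\mathcal{R}(\M,I)$ of Theorem~\ref{antonino1.3}. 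Taken together these imply the strict inequality $\mathrm{reg}_{\mathcal{R}(\M)}\mathfrak{D}_\M(I)<\mathrm{reg}_{\mathcal{R}(\M)}\mathcal{R}(\M,I)$, which will be pivotal.

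\textbf{Step 1 (the lower bound).} Applying the first bullet of item (ii) and the shift rule (i) to each sequence, I obtain
\begin{equation*}
\mathrm{reg}_{\mathcal{R}(\M)}\mathcal{P}_\M(I)-1\ \le\ \max\{\mathrm{reg}_{\mathcal{R}(\M)}\mathcal{R}(\M,I),\,\mathrm{reg}_{\mathcal{R}(\M)}\mathfrak{D}_\M(I)\}
\end{equation*}
and the analogous estimate with $\mathcal{Q}_\M(I)$ in place of $\mathcal{P}_\M(I)$ and $\mathrm{gr}_\M(I)$ in place of $\mathcal{R}(\M,I)$. By the strict inequality recorded above and Lemma~\ref{Zamani}, each right-hand maximum collapses to $\mathrm{reg}_{\mathcal{R}(\M)}\mathcal{R}(\M,I)$, which yields the claimed lower bound.

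\textbf{Step 2 (the equalities).} If $\mathfrak{d}_3(I)=0$ the two sequences degenerate into isomorphisms $\mathcal{R}(\M,I)\cong\mathcal{P}_\M(I)(1)$ and $\mathrm{gr}_\M(I)\cong\mathcal{Q}_\M(I)(1)$, so the conclusion is immediate from rule (i) and Lemma~\ref{Zamani}. If instead $\mathrm{reg}_{\mathcal{R}(\M)}\mathcal{R}(\M,I)>\mathfrak{d}_3(I)>0$, one has $\mathrm{reg}_{\mathcal{R}(\M)}\mathfrak{D}_\M(I)=\mathfrak{d}_3(I)-1<\mathrm{reg}_{\mathcal{R}(\M)}\mathcal{R}(\M,I)-1$, so the equality clause ``$\mathrm{reg}_S P\ne\mathrm{reg}_S M-1$'' of rule (ii) is activated in both sequences, and the inequalities of Step~1 become equalities. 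This delivers $\mathrm{reg}_{\mathcal{R}(\M)}\mathcal{P}_\M(I)=\mathrm{reg}_{\mathcal{R}(\M)}\mathcal{Q}_\M(I)=\mathrm{reg}_{\mathcal{R}(\M)}\mathcal{R}(\M,I)+1$.

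\textbf{Main obstacle.} There is no serious obstacle. The only point that must be tracked carefully is the chain $\mathrm{reg}_{\mathcal{R}(\M)}\mathfrak{D}_\M(I)\le\mathfrak{d}_3(I)-1<\mathrm{reg}_{\mathcal{R}(\M)}\mathcal{R}(\M,I)-1$, on which both the collapse of the maxima and the activation of the equality clause of rule (ii) depend; this is granted by Theorem~\ref{antonino1.3} in the non-degenerate regime and is vacuous when $\mathfrak{d}_3(I)=0$.
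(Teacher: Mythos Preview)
Your proposal is correct and follows essentially the same route as the paper's own proof: both use the two horizontal short exact sequences from the displayed diagram together with Lemma~\ref{Zamani}, the identification $\mathrm{reg}_{\mathcal{R}(\M)}\mathfrak{D}_\M(I)=\mathfrak{d}_3(I)-1$ for $\mathfrak{d}_3(I)>0$, and Theorem~\ref{antonino1.3}, then apply the shift rule and the short-exact-sequence rules (i)--(ii) in exactly the way you describe. The only organizational difference is that the paper treats the case $\mathfrak{d}_3(I)=0$ first (obtaining the equalities, hence the inequality) and then the case $\mathfrak{d}_3(I)>0$, whereas you prove the inequality uniformly in Step~1 and the equalities in Step~2; the content is identical.
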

\begin{proof} There are short exact sequences of finitely generated graded $\mathcal{R}(\M)$-modules
    $$
    0 \rightarrow \mathcal{R}(\M, I) \rightarrow \mathcal{P}_\M(I)(1) \rightarrow \mathfrak{D}_{\M}(I) \rightarrow 0,
    $$
    $$
    0 \rightarrow \mathrm{gr}_\M(I) \rightarrow \mathcal{Q}_\M(I)(1) \rightarrow \mathfrak{D}_{\M}(I) \rightarrow 0.
    $$ Recall that Lemma \ref{Zamani} gives $\mathrm{reg}_{\mathcal{R}(\M)}\mathcal{R}(\M,I)=\mathrm{reg}_{\mathcal{R}(\M)}\mathrm{gr}_\M(I)$. 
    
    First, assume that
    $\mathfrak{d}_3(I)=0$. Then, $\mathfrak{D}_\M(I)=0$. In this case, $\mathcal{P}_\M(I)(1)\cong\mathcal{R}(\M,I)$ and $\mathcal{Q}_\M(I)(1)\cong\mathrm{gr}_\M(I)$, so that $$\mathrm{reg}_{\mathcal{R}(\M)}\mathcal{P}_\M(I)-1=\mathrm{reg}_{\mathcal{R}(\M)}\mathcal{R}(\M,I)=\mathrm{reg}_{\mathcal{R}(\M)}\mathrm{gr}_\M(I)=\mathrm{reg}_{\mathcal{R}(\M)}\mathcal{Q}_\M(I)-1.$$

    Note that if $\mathfrak{d}_3(I)>0$ then in particular $\mathrm{reg}_{\mathcal{R}(\M)}\mathfrak{D}_\M(I)=\mathfrak{d}_3(I)-1$. On the other hand, Theorem \ref{antonino1.3} yields $$\mathfrak{d}_3(I)\le\mathrm{reg}_{\mathcal{R}(\M)}\mathcal{R}(\M,I)=\mathrm{reg}_{\mathcal{R}(\M)}\mathrm{gr}_\M(I)$$ and thus the above exact sequence implies 
    \begin{align*}
        \mathrm{reg}_{\mathcal{R}(\M)}\mathcal{P}_\M(I)(1)\ &\le\ \max\{\mathrm{reg}_{\mathcal{R}(\M)}\mathcal{R}(\M,I),\mathrm{reg}_{\mathcal{R}(\M)}\mathfrak{D}_\M(I)\}\\
        &=\ \max\{\mathrm{reg}_{\mathcal{R}(\M)}\mathcal{R}(\M,I),\mathfrak{d}_3(I)-1\}\\
        &=\ \mathrm{reg}_{\mathcal{R}(\M)}\mathcal{R}(\M,I).
    \end{align*}
    Similarly, $\mathrm{reg}_{\mathcal{R}(\M)}\mathcal{Q}_\M(I)(1)\le\mathrm{reg}_{\mathcal{R}(\M)}\mathcal{R}(\M,I)$. Hence, $$\mathrm{reg}_{\mathcal{R}(\M)}\mathcal{P}_\M(I),\mathrm{reg}_{\mathcal{R}(\M)}\mathcal{Q}_\M(I)\le\mathrm{reg}_{\mathcal{R}(\M)}\mathcal{R}(\M,I)+1.$$ So, if now we suppose that $\mathrm{reg}_{\mathcal{R}(\M)}\mathcal{R}(\M,I)>\mathfrak{d}_3(I)>0$, we finally derive $$\mathrm{reg}_{\mathcal{R}(\M)}\mathcal{R}(\M,I)-1\ne\mathrm{reg}_{\mathcal{R}(\M)}\mathfrak{D}_\M(I),$$ and the equalities  $\mathrm{reg}_{\mathcal{R}(\M)}\mathcal{P}_\M(I)=\mathrm{reg}_{\mathcal{R}(\M)}\mathcal{Q}_\M(I)=\mathrm{reg}_{\mathcal{R}(\M)}\mathcal{R}(\M,I)+1$ follow. \qed
\end{proof}

\medskip

Inspired by Theorem \ref{CD-reg} and Theorem \ref{antonino1.3}, we might wonder whether the comparison $${\mathfrak d}_3(I) \leq \mathrm{reg}_{\mathcal{R}(\M)}\,\mathcal{R}(\M)$$ holds in general. However, in the case where $I$ is not a reduction of $\M$, the relationship with the regularity of $\mathcal{R}(\M)$ may become rather wild, as we can see in the examples below.

\begin{Example}\label{powerk} \rm
    Let $(R, \M)$ be a local ring with infinite residue field and ${\rm depth}\,R>0$. By \cite[Proposition 1.5]{Rossi-Dinh-Trung}, we have  $\M^{n + 1} : \M = \M^{n}$ for all $n \geq \mathrm{reg}_{\mathcal{R}(\M)}\,\mathcal{R}(\M)$. Now set $I={\M}^k$ for any given $k \geq 2$ (note $I$ cannot be a reduction of $\M$). We can write \begin{equation}\label{power} I\M^{n} = \M^{n + k} = \M^{n + k + 1} : \M = I\M^{n+1} : \M \end{equation} for all $n \geq \mathrm{reg}_{\mathcal{R}(\M)}\,\mathcal{R}(\M)$ and therefore  ${\mathfrak d}_3(I) \leq  \mathrm{reg}_{\mathcal{R}(\M)}\,\mathcal{R}(\M)$. Furthermore notice that, by using (\ref{power}) whenever $ n \geq \mathrm{reg}_{\mathcal{R}(\M)}\,\mathcal{R}(\M)-k\geq 0$, we obtain  $${\mathfrak d}_3(I) \leq  \mathrm{reg}_{\mathcal{R}(\M)}\,\mathcal{R}(\M)-k<\mathrm{reg}_{\mathcal{R}(\M)}\,\mathcal{R}(\M).$$ Finally, if $k\geq  \mathrm{reg}_{\mathcal{R}(\M)}\,\mathcal{R}(\M)$ then it is easy to see that ${\mathfrak d}_3(I)=0$. In particular, if $\mathrm{reg}_{\mathcal{R}(\M)}\,\mathcal{R}(\M)\leq 2$ then ${\mathfrak d}_3(I)=0$.
\end{Example}

\begin{Example}\rm Let $R$ be as in \cite[Example 4.3]{CD}. As observed there, $\mathrm{reg}_{\mathcal{R}(\M)}\,\mathcal{R}(\M)=8$. Consider the ideal $I={\M}^2$. By Example \ref{powerk} above, we can write  $${\mathfrak d}_3(I) \leq  \mathrm{reg}_{\mathcal{R}(\M)}\,\mathcal{R}(\M)-2=6,$$ whereas,  on the other hand, a computation shows $I{\M}^5\neq I{\M}^6 : \M$. Therefore, we must have ${\mathfrak d}_3(I)=6$.

\end{Example}

We are also able to illustrate the
opposite situation, as follows.

\begin{Example} \rm
     Let $I = (x^{a}, y^{a}) \subset R=k[\![x,y]\!]$, where $k$ is a field and $a \geq 2$. Clearly, $I$ is not a reduction of $\M=(x, y)$. From \cite[Example 4.1]{Dao} we can write  $${\mathfrak d}_3(I) = a - 1 > 0 = \mathrm{reg}_{\mathcal{R}(\M)}\,\mathcal{R}(\M),$$ where the last equality holds because $R$ is a regular local ring (see Theorem \ref{charact2} for a more general statement).
\end{Example}

\section{Approach (and a conjecture) via reduction numbers}

We begin recalling the following result in dimension one.

\begin{Proposition}$($\cite[Corollary 3.7]{CD}$)$ If $(R, \M)$ is a one-dimensional Cohen-Macaulay local ring with infinite residue field, then $$\mathfrak{d}_3(I) = {\rm r}(\M)$$ for any minimal reduction $I$ of $\M$.
    
\end{Proposition}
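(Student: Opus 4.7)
The setting is a one-dimensional Cohen--Macaulay local ring $(R,\M,K)$ with $K$ infinite. Since the analytic spread of $\M$ equals $\dim R = 1$, every minimal reduction $I$ of $\M$ is principal, and the Cohen--Macaulay hypothesis forces its generator to be a non-zerodivisor; I would write $I=(x)$ with $x$ regular on $R$. The plan is to prove $\mathfrak{d}_3(I)\ge\mathrm{r}(\M)$ and $\mathfrak{d}_3(I)\le\mathrm{r}(\M)$ separately.

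For the lower bound, I would invoke the general fact already recalled in the excerpt (a consequence of \cite[Theorem 3.4]{CD}) that $\mathrm{r}_I(\M)\le \mathfrak{d}_3(I)$ whenever $I$ is a reduction of $\M$; combined with the trivial inequality $\mathrm{r}(\M)\le \mathrm{r}_I(\M)$ from the definition of $\mathrm{r}(\M)$, this immediately yields $\mathrm{r}(\M)\le \mathfrak{d}_3(I)$.

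For the upper bound, set $r=\mathrm{r}_I(\M)$. For every $k\ge r$, iterating $I\M^r = \M^{r+1}$ produces $I\M^k = x\M^k = \M^{k+1}$ and $I\M^{k+1} = \M^{k+2} = x\M^{k+1}$, so
\[
I\M^{k+1}:\M \;=\; x\M^{k+1}:\M.
\]
The inclusion $I\M^k \subseteq I\M^{k+1}:\M$ is automatic; conversely, if $a\M\subseteq x\M^{k+1}$ then in particular $ax \in x\M^{k+1}$, and cancelling the non-zerodivisor $x$ forces $a\in \M^{k+1} = I\M^k$. Thus $I\M^k$ is weakly $\M$-full for every $k\ge r$, giving $\mathfrak{d}_3(I)\le \mathrm{r}_I(\M)$. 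To pass from $\mathrm{r}_I(\M)$ to $\mathrm{r}(\M)$, I would invoke the classical fact that in a one-dimensional Cohen--Macaulay local ring with infinite residue field the reduction number $\mathrm{r}_I(\M)$ coincides with the postulation number of the Hilbert--Samuel function of $\M$, and is therefore independent of the chosen minimal reduction; in particular $\mathrm{r}_I(\M) = \mathrm{r}(\M)$.

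The only nontrivial move is the upper bound, where everything pivots on cancelling $x$ in $ax\in x\M^{k+1}$; the existence of this cancellation is precisely what the Cohen--Macaulay, dimension-one, infinite-residue-field hypotheses are there to guarantee (principal minimal reduction with regular generator, and $I\M^k$ filling $\M^{k+1}$ for $k\ge r$). I do not expect any substantial obstacle beyond correctly locating a reference for the independence of $\mathrm{r}_I(\M)$ from the minimal reduction $I$, which is a standard consequence of the stability of the Hilbert function in dimension one.
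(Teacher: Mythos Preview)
Your argument is correct. Note, however, that the paper does not supply its own proof of this proposition; it is simply recalled from \cite[Corollary~3.7]{CD}. Within the paper's toolkit the expected derivation would go through Theorem~\ref{CD-thm}, which gives $\mathfrak{d}_3(I)=\max\{{\rm r}_I(\M),\,s(\M)-1\}$: the very same cancellation you perform (multiply by the regular generator $x\in\M$ and divide it out) also shows $\widetilde{\M^k}=\M^k$ for every $k\ge{\rm r}_I(\M)$, so that $s(\M)-1\le{\rm r}_I(\M)$ and the maximum collapses. Your direct computation of $I\M^{k+1}:\M$ bypasses the Ratliff--Rush filtration entirely and is slightly cleaner. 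The independence of ${\rm r}_I(\M)$ from the choice of minimal reduction in the one-dimensional Cohen--Macaulay case is indeed classical (it coincides with the top degree in which the Hilbert function of $\mathrm{gr}_\M(R)$ is strictly below the multiplicity).
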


It is worth pointing out that this proposition is no longer true if the reduction $I$ is not required to be minimal, as exemplified in \cite[Example 4.2]{CD}.

In higher dimension, there is the conjecture below.

\begin{Conjecture} $($\cite[Conjecture 3.8]{CD}$)$\label{CD-conject}  If $(R, \M)$ is a Cohen-Macaulay local ring with infinite residue field and
${\rm dim}\,R \geq 2$, then $$\mathfrak{d}_3(I) = {\rm r}_I(\M)$$ for any minimal reduction $I$ of $\M$. 

\end{Conjecture}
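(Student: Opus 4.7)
\medskip

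\noindent\textbf{Proof plan.} The lower bound $\mathfrak{d}_3(I) \geq {\rm r}_I(\M)$ is already established (see the discussion after Question \ref{qdao}), so the task is to prove the reverse inequality. Set $r := {\rm r}_I(\M)$. For every $k \geq r$ we have $I\M^k = \M^{k+1}$ and $I\M^{k+1} = \M^{k+2}$, so the condition that $I\M^k$ be weakly $\M$-full becomes the purely intrinsic identity $\M^{k+2} : \M = \M^{k+1}$. Thus the conjecture is equivalent to
$$
\M^{n+1} : \M \;=\; \M^n \quad \text{for all } n \geq r+1.
$$

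A direct appeal to Theorem \ref{antonino1.3} together with Lemma \ref{lemareg} only yields the weaker bound $\mathfrak{d}_3(I) \leq \mathrm{reg}_{\mathcal{R}(\M)}\mathcal{R}(\M)$, which, by the known formula $a_d(\mathrm{gr}_\M(R)) = {\rm r}_I(\M) - d$ in the Cohen--Macaulay case, can strictly exceed ${\rm r}_I(\M)$ as soon as some lower $a$-invariant $a_i(\mathrm{gr}_\M(R))$ with $i < d$ is large enough. The natural way to bypass this loss goes through the Ratliff--Rush filtration: by Remark \ref{Rem:depth}, if $\widetilde{\M^n} = \M^n$ for all $n \geq r+1$, then $\M^{n+1} : \M = \widetilde{\M^{n+1}} : \M = \widetilde{\M^n} = \M^n$ in this range, which is exactly what we need. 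So the conjecture reduces to showing that the Ratliff--Rush filtration of $\M$ agrees with the adic filtration from step $r+1$ onward.

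To prove this stabilization I would induct on $d = \dim R \geq 2$. For $d \geq 3$, pick a sufficiently general $x \in I \setminus \M I$ which is $\M$-superficial; since $R$ is Cohen--Macaulay, $x$ is a non-zerodivisor, $\bar R := R/(x)$ is Cohen--Macaulay of dimension $d-1 \geq 2$, and $\bar I := I/(x)$ is a minimal reduction of $\bar \M := \M/(x)$ with ${\rm r}_{\bar I}(\bar \M) = {\rm r}_I(\M)$ by the standard good behavior of superficial elements in the Cohen--Macaulay setting. The inductive hypothesis applied to $\bar R$ gives $\widetilde{\bar \M^n} = \bar \M^n$ for $n \geq r+1$, and the Valabrega--Valla criterion applied to the regular element $x$ lifts this back to $R$ in the required range, once one checks that $x\M^n = (x) \cap \M^{n+1}$ for $n \geq r$.

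The base case $d = 2$ is the true crux. When $\mathrm{depth}\,\mathrm{gr}_\M(R) \geq 1$, Remark \ref{Rem:depth} gives $\widetilde{\M^n} = \M^n$ for \emph{all} $n \geq 1$ and we are done. The hard case is $\mathrm{depth}\,\mathrm{gr}_\M(R) = 0$: here the $a$-invariant $a_0(\mathrm{gr}_\M(R))$ records precisely when the Ratliff--Rush stabilization sets in, and the conjecture becomes the bound $a_0(\mathrm{gr}_\M(R)) \leq r$. I would attack this via the Sally module $S_I(\M) = \bigoplus_{k \geq 1} \M^{k+1}/I\M^k$ and the classical relations of Rossi--Valla, Huckaba--Marley, and Goto--Nishida linking the Hilbert coefficients $e_1(\M), e_2(\M)$ to ${\rm r}_I(\M)$ in the two-dimensional Cohen--Macaulay setting. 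Establishing this bound on $a_0$ in full generality, without any additional depth hypothesis on $\mathrm{gr}_\M(R)$, is where I expect the main obstacle to lie.
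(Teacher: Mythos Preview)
The statement you are addressing is a \emph{conjecture} in the paper, not a theorem: the paper does not claim to prove it in general. What the paper actually establishes (via Theorem~\ref{CD-thm}, Theorem~\ref{teoconjecture}, Corollary~\ref{cors=3}, and the two propositions following it) are partial cases: $s(\M)\le 4$, the case $I\widetilde{\M^{s-2}}=I\M^{s-2}$, and the minimal multiplicity case. Your initial reduction is correct and is exactly the content of Theorem~\ref{CD-thm}: since $\mathfrak d_3(I)=\max\{{\rm r}_I(\M),\,s(\M)-1\}$, the conjecture is equivalent to $s(\M)-1\le {\rm r}_I(\M)$, i.e.\ to $\widetilde{\M^n}=\M^n$ for all $n\ge r+1$. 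So far you and the paper agree completely.

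Where your proposal is not a proof is precisely where you say it is not: the base case $d=2$ with $\mathrm{depth}\,\mathrm{gr}_\M(R)=0$ is left as ``the main obstacle'', and you offer no argument for the required bound $a_0(\mathrm{gr}_\M(R))\le r$. That bound is, in disguise, the full conjecture, so nothing has been gained. Moreover, the inductive step is not clean either. The equality ${\rm r}_{\bar I}(\bar\M)={\rm r}_I(\M)$ for a superficial $x$ is \emph{not} ``standard good behavior in the Cohen--Macaulay setting''; only the inequality $\le$ is automatic, and equality typically needs a depth hypothesis on $\mathrm{gr}_\M(R)$ (though, to be fair, the inequality is all you actually use). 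More seriously, the lifting of the Ratliff--Rush stabilization from $\bar R$ to $R$ via ``Valabrega--Valla'' is asserted but not justified: you still need to check $(x)\cap\M^{n+1}=x\M^n$ for $n\ge r$, which is itself a statement about the depth of $\mathrm{gr}_\M(R)$ in high degrees and is not a consequence of the Cohen--Macaulayness of $R$ alone. In short, your plan correctly locates the difficulty (exactly where the paper locates it) but does not overcome it; it is a strategy, not a proof, and the conjecture remains open.
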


Our main purpose in this section is to provide partial answers to this conjecture. A crucial tool in this investigation is given by the following result.

\begin{Theorem} $($\cite[Theorem 3.4]{CD}$)$ \label{CD-thm} Let $(R, \M)$  be a local ring with infinite residue field and ${\rm depth}\,R>0$, and let $I$ be a reduction of $\M$. Then 
	$$\mathfrak{d}_3(I)  = \mathrm{max}\{{\rm r}_{I}(\M), s(\M) - 1\}.$$
    
\end{Theorem}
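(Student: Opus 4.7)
The plan is to prove $\mathfrak{d}_3(I)\le t$ and $\mathfrak{d}_3(I)\ge t$ separately, where $t := \max\{{\rm r}_I(\M),\,s(\M)-1\}$. The conceptual glue is that, once $k\ge {\rm r}_I(\M)$, the product $I\M^k$ collapses to the power $\M^{k+1}$, so weak $\M$-fullness of $I\M^k$ becomes the purely Rees-theoretic question of whether $\M^{k+2}:\M=\M^{k+1}$; Mafi's identity $\widetilde{\M^{k+2}}:\M=\widetilde{\M^{k+1}}$ then converts this into a Ratliff--Rush condition controlled precisely by $s(\M)$.

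For the upper bound I would take any $k\ge t$, use $k\ge {\rm r}_I(\M)$ to write $I\M^k=\M^{k+1}$ and $I\M^{k+1}=\M^{k+2}$, and use $k+1\ge s(\M)$ to conclude $\widetilde{\M^{k+1}}=\M^{k+1}$ and $\widetilde{\M^{k+2}}=\M^{k+2}$. Mafi's lemma then gives
$$
I\M^{k+1}:\M \,=\, \M^{k+2}:\M \,=\, \widetilde{\M^{k+2}}:\M \,=\, \widetilde{\M^{k+1}} \,=\, \M^{k+1} \,=\, I\M^k,
$$
so $I\M^k$ is weakly $\M$-full for every $k\ge t$, as desired.

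For the lower bound I would treat the two contributions separately. Setting $r := {\rm r}_I(\M)$, minimality of $r$ forces $I\M^{r-1}\subsetneq \M^r$, whereas $I\M^r:\M = \M^{r+1}:\M \supseteq \M^r$; hence $I\M^{r-1}$ fails to be weakly $\M$-full and $\mathfrak{d}_3(I)\ge r$. If $s:=s(\M)$ satisfies $s-1\le r$ the desired bound $s-1$ is already obtained, so I may assume $s-1>r$ and pick $k=s-2\ge r$. Then $I\M^k=\M^{s-1}$ and $I\M^{k+1}=\M^s$, and
$$
\M^s:\M \,=\, \widetilde{\M^s}:\M \,=\, \widetilde{\M^{s-1}} \,\neq\, \M^{s-1}
$$
by the very definition of $s(\M)$, so $I\M^{s-2}$ is again not weakly $\M$-full and $\mathfrak{d}_3(I)\ge s-1$. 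Combining these two lower bounds gives $\mathfrak{d}_3(I)\ge t$.

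The main obstacle I expect is not any single computation but organizing the case analysis cleanly: one has to pick $k$ to exhibit a failure of weak $\M$-fullness and simultaneously arrange that both the reduction identity $I\M^k=\M^{k+1}$ and the strict Ratliff--Rush inequality $\widetilde{\M^{k+1}}\ne\M^{k+1}$ are available at that very $k$. The degenerate cases $r=0$ (i.e.\ $I=\M$) and $s=1$ (i.e.\ $\mathrm{depth}\,\mathrm{gr}_\M(R)>0$, by Remark \ref{Rem:depth}) are routine, since in each such case the corresponding lower bound is $0$.
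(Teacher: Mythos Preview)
The paper does not give its own proof of this statement: Theorem~\ref{CD-thm} is simply quoted from \cite[Theorem 3.4]{CD} and used as a tool, so there is nothing in the present paper to compare your argument against.

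That said, your proof is correct and self-contained within the framework already laid out in the paper. The upper bound is exactly right: for $k\ge t$ the reduction identity $I\M^k=\M^{k+1}$ reduces the weak $\M$-fullness condition to $\M^{k+2}:\M=\M^{k+1}$, and the Ratliff--Rush identity $\widetilde{\M^{k+2}}:\M=\widetilde{\M^{k+1}}$ from Remark~\ref{Rem:depth} together with $k+1\ge s(\M)$ closes the argument. For the lower bound, your two-pronged analysis is also valid: the failure of $I\M^{r-1}$ to be weakly $\M$-full (when $r\ge1$) follows from $I\M^{r-1}\subsetneq\M^r\subseteq\M^{r+1}:\M=I\M^r:\M$, and in the remaining case $s-1>r$ the choice $k=s-2\ge r$ lets you invoke both $I\M^k=\M^{k+1}$ and the strict inequality $\widetilde{\M^{s-1}}\neq\M^{s-1}$, which follows from the minimality in the definition of $s(\M)$. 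The degenerate cases $r=0$ and $s=1$ are handled as you say.
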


Now we can illustrate Conjecture \ref{CD-conject} in the case ${\rm dim}\,R=2$.

\begin{Example}\rm  Let $(R, \M)$ be the local ring of a rational triple point as in \cite[Example 4.1]{CD}, where we highlighted that $s(\M)=1$. Then, by Theorem \ref{CD-thm}, we obtain $\mathfrak{d}_3(I)  = {\rm r}_{I}(\M)$ for any reduction (in particular, minimal reduction) $I$ of $\M$.
    
\end{Example}

The above example actually shows that Conjecture \ref{CD-conject} is true in the case $s(\M) = 1$, or equivalently, ${\rm depth}\,{\rm gr}_{\M}(R)>0$ (see also Proposition \ref{minmult} below).  Furthermore, it is clear that $\widetilde{\M} = \M$, which forces $s(\M) \neq 2$. Therefore, in tackling the conjecture we can suppose $s(\M) \geq 3$.

\begin{Theorem}\label{teoconjecture} Conjecture \ref{CD-conject} holds true in case
 $s := s(\M)\geq 3$ and $$I\widetilde{\M^{s - 2}} = I\M^{s - 2}.$$
\end{Theorem}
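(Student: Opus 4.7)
The plan is to invoke Theorem \ref{CD-thm}, which gives $\mathfrak{d}_3(I) = \max\{r_I(\M),\, s-1\}$, and thereby reduce Conjecture \ref{CD-conject} to the inequality $r_I(\M) \geq s-1$. Suppose, for a contradiction, that $r := r_I(\M) \leq s-2$; then $\M^{s-1} = I\M^{s-2}$, and the hypothesis rewrites this as $\M^{s-1} = I\widetilde{\M^{s-2}}$.

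I would then exploit the Ratliff--Rush identity $\widetilde{\M^{s-1}}:\M = \widetilde{\M^{s-2}}$ from Remark \ref{Rem:depth} (which gives $\M\widetilde{\M^{s-2}} \subseteq \widetilde{\M^{s-1}}$), together with the defining equality $\widetilde{\M^s} = \M^s$, to chain
$$\M^s \;=\; \M\cdot\M^{s-1} \;=\; I\cdot\M\widetilde{\M^{s-2}} \;\subseteq\; I\widetilde{\M^{s-1}} \;\subseteq\; \M\widetilde{\M^{s-1}} \;\subseteq\; \widetilde{\M^s} \;=\; \M^s,$$
forcing equalities throughout. Hence $I\widetilde{\M^{s-1}} = I\M^{s-1}$, so $I$ annihilates the finite-length module $N := \widetilde{\M^{s-1}}/\M^{s-1}$, which is nonzero by the minimality of $s$.

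To derive the contradiction, I would show $N = 0$. For any $y \in \widetilde{\M^{s-1}}$, the inclusion $y\M \subseteq \M^s \subseteq \M^{s-1}$ gives $y \in \M^{s-1}:\M \subseteq \widetilde{\M^{s-1}}:\M = \widetilde{\M^{s-2}}$, so by hypothesis $Iy \subseteq I\widetilde{\M^{s-2}} = I\M^{s-2}$. Writing $I = (x_1,\ldots,x_d)$ with $x_1,\ldots,x_d$ a (superficial) regular sequence of length $d = \dim R \geq 2$---available since $R$ is Cohen--Macaulay with infinite residue field and $I$ is a minimal reduction---and $yx_i = \sum_j x_j a_{ij}$ with $a_{ij} \in \M^{s-2}$, the relations amount to $(A - yE)\vec{x} = 0$ with $E$ the identity matrix. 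Multiplying by the adjugate of $A - yE$ gives $\det(A - yE)\cdot x_i = 0$ for each $i$; since $x_1$ is regular in $R$, we conclude $\det(A - yE) = 0$, placing $y$ in the integral closure $\overline{\M^{s-2}}$. The main obstacle is to upgrade this to the finer membership $y \in \M^{s-1}$: leveraging the stronger inclusion $y\M \subseteq I\M^{s-1}$ (which permits choosing the $a_{ij}$ already in $\M^{s-1}$) together with the superficial properties of the $x_i$ (which control the colon ideals $\M^{n+1}:x_i$ past the threshold $n \geq r$), a refined Koszul/Cramer argument should yield $y \in \M^{s-1}$, contradicting $y \notin \M^{s-1}$ and thus the minimality of $s$.
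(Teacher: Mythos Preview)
Your opening reduction is exactly right and matches the paper: by Theorem~\ref{CD-thm} it suffices to prove $r_I(\M)\ge s-1$, and you assume $r:=r_I(\M)\le s-2$ for a contradiction, so that $I\M^{s-2}=\M^{s-1}$ and, by the hypothesis, $I\widetilde{\M^{s-2}}=\M^{s-1}$. Your chain argument is also correct and yields $I\widetilde{\M^{s-1}}=\M\widetilde{\M^{s-1}}=\M^s=I\M^{s-1}$, hence $I\cdot N=\M\cdot N=0$ for $N=\widetilde{\M^{s-1}}/\M^{s-1}$.

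The gap is in the last paragraph. You reduce to showing $N=0$, i.e.\ $y\in\M^{s-1}$ for every $y\in\widetilde{\M^{s-1}}$, and you propose a determinant-trick/Koszul argument using $yx_i\in I\M^{s-1}$. The determinant trick only gives $y\in\overline{\M^{s-1}}$, which you acknowledge is too weak. The Koszul refinement you sketch does give a bit more: since $x_1,\dots,x_d$ is a regular sequence, each relation $\sum_j(a_{ij}-y\delta_{ij})x_j=0$ lies in the module of Koszul syzygies, so $a_{ii}-y\in I$ and hence $y\in I+\M^{s-1}$. But to finish you would still need $I\cap\widetilde{\M^{s-1}}\subseteq\M^{s-1}$ (a Valabrega--Valla-type statement), which is \emph{not} available in general when $\mathrm{depth}\,\mathrm{gr}_\M(R)=0$ --- precisely the regime $s\ge 3$ forces. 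Invoking ``superficial properties of the $x_i$'' does not help either: superficiality controls $\M^{n+1}:x_i$ for $n\gg0$, not at the specific threshold $n=s-1$. So the phrase ``a refined Koszul/Cramer argument should yield $y\in\M^{s-1}$'' is a hope, not an argument, and I do not see how to complete it without essentially importing the missing ingredient below.

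What the paper does instead is a one-line appeal to a known Ratliff--Rush identity for minimal reductions: by \cite[Proposition~2.4 and Theorem~2.10]{Mafi} one has $I\widetilde{\M^{k}}=\widetilde{\M^{k+1}}$ for all $k\ge r_I(\M)$. Applying this with $k=s-2\ge r$ gives immediately
\[
\widetilde{\M^{s-1}} \;=\; I\widetilde{\M^{s-2}} \;=\; I\M^{s-2} \;=\; \M^{s-1},
\]
contradicting the definition of $s=s(\M)$. In other words, the identity $I\widetilde{\M^{s-2}}=\widetilde{\M^{s-1}}$ is exactly the piece your argument is trying to reprove by hand; once you quote it, your chain and the Cramer computation become unnecessary.
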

\begin{proof} By virtue of Theorem \ref{CD-thm}, it suffices to show that ${\rm r}_{I}(\M) \geq s - 1$. Suppose, by way of contradiction, that $${\rm r}_{I}(\M) \leq s - 2.$$ Since $I$ is a minimal reduction of $\M$, we can apply \cite[Proposition 2.4 and Theorem 2.10]{Mafi} to obtain $I\widetilde{\M^k} = \widetilde{\M^{k + 1}}$ for all $k \geq {\rm r}_{I}(\M)$. Consequently,
    \begin{equation}
        \widetilde{\M^{s - 1}} = I\widetilde{\M^{s - 2}} = I\M^{s - 2} = \M^{s - 1}, \nonumber
    \end{equation}
    which contradicts the definition of $s$.   \qed 
\end{proof} 

\medskip

Since $\M = \widetilde{\M}$ is always true, the case  $s(\M) = 3$ is a straightforward consequence of Theorem \ref{teoconjecture}.

\begin{Corollary}\label{cors=3}
    Conjecture \ref{CD-conject} is true if $s(\M) = 3$.
\end{Corollary}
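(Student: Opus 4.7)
The plan is to obtain Corollary \ref{cors=3} as an immediate consequence of Theorem \ref{teoconjecture}. Setting $s := s(\M) = 3$, the additional hypothesis $I\widetilde{\M^{s-2}} = I\M^{s-2}$ required by that theorem becomes
\[
I\widetilde{\M^{\,1}} \ = \ I\M^{1},
\]
i.e., $I\widetilde{\M} = I\M$.

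Thus my proof proposal is simply to verify that $\widetilde{\M} = \M$ always holds, and then invoke Theorem \ref{teoconjecture}. The equality $\widetilde{\M} = \M$ is indeed noted in the paragraph immediately preceding the corollary; for completeness, one can justify it by recalling that $\widetilde{\M} \subseteq \sqrt{\M} = \M$ because $\widetilde{\M}$ is contained in the integral closure of $\M$, while the reverse inclusion $\M \subseteq \widetilde{\M}$ is part of the definition of the Ratliff-Rush closure. Hence the hypothesis of Theorem \ref{teoconjecture} is satisfied trivially when $s(\M) = 3$, and the conclusion $\mathfrak{d}_3(I) = {\rm r}_I(\M)$ follows for any minimal reduction $I$ of $\M$.

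There is essentially no obstacle here beyond recognizing the trivialization: with $s-2=1$, the auxiliary compatibility between $I$ and the Ratliff-Rush filtration reduces to the tautological identity $\M = \widetilde{\M}$. The corollary is therefore a one-line application of the previous theorem.
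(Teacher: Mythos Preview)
Your proof is correct and matches the paper's own argument exactly: the paper also observes that $\widetilde{\M}=\M$ makes the hypothesis $I\widetilde{\M^{s-2}}=I\M^{s-2}$ of Theorem~\ref{teoconjecture} automatic when $s(\M)=3$. Your added justification of $\widetilde{\M}=\M$ via the containment in the integral closure (and hence in $\sqrt{\M}=\M$) is a welcome elaboration of what the paper simply asserts.
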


\begin{Example}\rm
    Let $K$ be an infinite field and $$R = \frac{K[\![x_1,x_2,x_3,x_4,x_5,x_6,x_7]\!]}{(x_1^{2},x_1x_2,x_1x_3,x_1x_4,x_2x_3,x_2x_4,x_3x_4,x_2^{3} - x_1x_5,x_3^{3} - x_1x_6,x_4^{3} - x_1x_7)},$$ which is a three-dimensional Cohen-Macaulay local ring. Notice that $$x_1 \in \widetilde{\M^2} \setminus \M^2$$ and $\widetilde{\M^n}=\M^n$ for all $n \geq 3$. Thus, $s(\M) = 3$. By  Corollary \ref{cors=3}, we obtain $\mathfrak{d}_3(I) = {\rm r}_I(\M)$ for any minimal reduction $I$ of $\M$.
\end{Example}

When $s:=s(\M) \geq 4$, it may be difficult to determine whether $I\widetilde{\M^{s - 2}} = I\M^{s - 2}$. However, by employing a different argument, we are able to establish Conjecture \ref{CD-conject} in the case $s(\M) = 4$.

\begin{Proposition}
    Conjecture \ref{CD-conject} is true if $s(\M) = 4$.
\end{Proposition}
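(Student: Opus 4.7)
The plan is to argue by contradiction via Theorem \ref{CD-thm}, which reduces the statement to showing ${\rm r}_{I}(\M) \geq s(\M) - 1 = 3$. Assuming ${\rm r}_{I}(\M) \leq 2$, I would first dispose of the trivial subcases ${\rm r}_{I}(\M) \in \{0,1\}$: Mafi's Theorem 2.10 invoked inductively (as in the proof of Theorem \ref{teoconjecture}), combined with $\widetilde{\M} = \M$, forces $\widetilde{\M^{k}} = \M^{k}$ for every $k \geq 1$, yielding $s(\M) = 1$ and contradicting $s(\M) = 4$.

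The main case is therefore ${\rm r}_{I}(\M) = 2$, where $I\M^{2} = \M^{3}$ and iteration gives $\M^{4} = I^{2}\M^{2}$. Combining $\widetilde{\M^{4}} = \M^{4}$ (from $s(\M) = 4$) with the identity $\widetilde{\M^{k+1}} : \M = \widetilde{\M^{k}}$ recalled in Remark \ref{Rem:depth}, one gets $\widetilde{\M^{3}} = \M^{4} : \M$. My strategy is then to prove $\widetilde{\M^{3}} \subseteq \M^{3}$, which will directly contradict the definition of $s(\M) = 4$. To this end, starting from $w \in \widetilde{\M^{3}}$, so that $wx_{1} \in w\M \subseteq \M^{4} = I^{2}\M^{2}$, I would expand $wx_{1} = \sum_{i \leq j} x_{i}x_{j}u_{ij}$ with $u_{ij} \in \M^{2}$, and then separate out the summands divisible by $x_{1}$ to obtain
\[
x_{1} v \ = \ \sum_{2 \leq i \leq j} x_{i}x_{j}u_{ij} \ \in \ (x_{2},\ldots,x_{d})^{2} \M^{2}, \qquad v \, := \, w - x_{1}u_{11} - \sum_{j \geq 2} x_{j}u_{1j}.
\]

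The main obstacle is to extract enough from this relation to force $w \in \M^{3}$. Cohen-Macaulayness enters through the regular sequence $x_{1},\ldots,x_{d}$: since $x_{1}$ is a non-zerodivisor modulo $(x_{2},\ldots,x_{d})^{2}$, one has $(x_{1}) \cap (x_{2},\ldots,x_{d})^{2} = x_{1}(x_{2},\ldots,x_{d})^{2}$, so cancelling $x_{1}$ yields $v = \sum_{2 \leq i \leq j} x_{i}x_{j}v'_{ij}$ for some $v'_{ij} \in R$. The heart of the argument will be to upgrade this to $v'_{ij} \in \M$, for then $v \in (x_{2},\ldots,x_{d})^{2} \cdot \M \subseteq \M^{3}$ and hence $w \in \M^{3}$. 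My plan for the upgrade is to compare the two expressions for $x_{1}v$, producing a syzygy $\sum x_{i}x_{j}(x_{1}v'_{ij} - u_{ij}) = 0$ on $\{x_{i}x_{j}\}_{2 \leq i \leq j}$; because the first syzygies of this square of a regular sequence are generated by Koszul-type relations whose entries lie in $(x_{2},\ldots,x_{d})$, each $x_{1}v'_{ij}$ lands in $\M^{2} + (x_{2},\ldots,x_{d})$. Finally, since $x_{1}$ is a minimal generator of $\M$ (so its image $\bar{x}_{1}$ in $R/(x_{2},\ldots,x_{d})$ is a minimal generator of $\bar{\M}$, and in particular $\bar{x}_{1} \notin \bar{\M}^{2}$), the relation $\bar{x}_{1}\bar{v}'_{ij} \in \bar{\M}^{2}$ forces $\bar{v}'_{ij} \in \bar{\M}$, i.e., $v'_{ij} \in \M$, closing the contradiction.
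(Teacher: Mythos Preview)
Your proof is correct and reaches the same contradiction (${\rm r}_I(\M)\leq 2$ is impossible when $s(\M)=4$), but by a genuinely different route. The paper disposes of ${\rm r}_I(\M)\leq 2$ in one stroke by invoking \cite[Theorem~3.6]{C-P-R}: in a Cohen--Macaulay local ring, reduction number at most~$2$ forces $\mathrm{gr}_\M(R)$ to be Cohen--Macaulay, hence of positive depth, so $s(\M)=1$ by Remark~\ref{Rem:depth}. Your argument instead handles ${\rm r}_I(\M)\leq 1$ via Mafi's identities and then attacks ${\rm r}_I(\M)=2$ by a direct elementary computation, exploiting that a minimal reduction of $\M$ in a Cohen--Macaulay ring is generated by a regular sequence, together with the freeness of $J^2/J^3$ over $R/J$ for $J=(x_2,\dots,x_d)$ to control the syzygies of $\{x_ix_j\}$. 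In effect you are reproving by hand a special consequence of the Corso--Polini--Rossi theorem. The paper's route is far shorter but leans on a substantial external result; yours is self-contained and makes explicit where Cohen--Macaulayness enters. One small remark: your assertion that $x_1$ is a minimal generator of $\M$ (hence $\bar{x}_1\notin\bar{\M}^2$) is correct but merits a word --- if a minimal generator of $I$ lay in $\M^2$, Nakayama applied to $\M^{r+1}=I\M^r$ would make $(x_2,\dots,x_d)$ a strictly smaller reduction of $\M$, contradicting minimality of $I$; the same reasoning rules out $x_1\in\M^2+(x_2,\dots,x_d)$.
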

\begin{proof}
    Suppose  ${\rm r}_{I}(\M) \leq s(\M) - 2 = 2$. Then, by \cite[Theorem 3.6]{C-P-R}, the ring $\mathrm{gr}_{\M}(R)$ must be Cohen-Macaulay. Hence ${\rm depth}\,\mathrm{gr}_\M(R)={\rm dim}\,\mathrm{gr}_\M(R)={\rm dim}\,R>0$. But this implies $s(\M) = 1$, which is a contradiction. Therefore, $${\rm r}_{I}(\M) \geq s(\M) - 1$$ and the result follows from Theorem \ref{CD-thm}. \qed
\end{proof}

\medskip

We close the section with yet another affirmative case of the conjecture.

\begin{Proposition}\label{minmult}
    Conjecture \ref{CD-conject} is true if $R$ has minimal multiplicity.
\end{Proposition}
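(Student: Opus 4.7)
The plan is to reduce the statement to Theorem \ref{CD-thm} by showing that the minimal multiplicity hypothesis forces $s(\M)=1$, so that the ``max'' in the formula $\mathfrak{d}_3(I)=\max\{{\rm r}_I(\M),s(\M)-1\}$ collapses to ${\rm r}_I(\M)$.

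Concretely, I would first recall the classical characterization: a Cohen-Macaulay local ring $(R,\M)$ with infinite residue field has minimal multiplicity (in the sense $e(R)=\mu(\M)-\dim R+1$) if and only if for some (equivalently, every) minimal reduction $I$ of $\M$ one has $\M^2=I\M$, i.e., ${\rm r}_I(\M)\le 1$; moreover, in this situation the associated graded ring $\mathrm{gr}_\M(R)$ is Cohen-Macaulay (this is the result of Sally). Since $\dim R\ge 2$, this yields
$$\mathrm{depth}\,\mathrm{gr}_\M(R)\ =\ \dim\mathrm{gr}_\M(R)\ =\ \dim R\ \ge\ 2\ >\ 0.$$

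By Remark \ref{Rem:depth}, the positivity of $\mathrm{depth}\,\mathrm{gr}_\M(R)$ is equivalent to $\widetilde{\M^k}=\M^k$ for all $k\ge 0$, i.e., $s(\M)=1$. Plugging this into Theorem \ref{CD-thm} gives
$$\mathfrak{d}_3(I)\ =\ \max\{{\rm r}_I(\M),\,s(\M)-1\}\ =\ \max\{{\rm r}_I(\M),\,0\}\ =\ {\rm r}_I(\M),$$
since ${\rm r}_I(\M)$ is a non-negative integer. This holds for every minimal reduction $I$ of $\M$, establishing Conjecture \ref{CD-conject} under the minimal multiplicity hypothesis.

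There is no real obstacle here: the only non-trivial ingredient is the Cohen-Macaulayness of $\mathrm{gr}_\M(R)$ under minimal multiplicity, which is a well-known theorem and can simply be cited. All subsequent steps are formal consequences of results already recorded in the excerpt (Theorem \ref{CD-thm} and Remark \ref{Rem:depth}). If one wants to avoid citing Sally's theorem, one can argue directly: taking a minimal reduction $I$ with $\M^2=I\M$, and extending a minimal generating set of $I$ (which consists of a system of parameters by Cohen-Macaulayness) to one of $\M$, one checks that the initial forms of a regular sequence in $I$ remain a regular sequence in $\mathrm{gr}_\M(R)$, again giving $\mathrm{depth}\,\mathrm{gr}_\M(R)>0$.
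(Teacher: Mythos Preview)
Your proof is correct and follows essentially the same route as the paper: invoke Sally's theorem to get that $\mathrm{gr}_\M(R)$ is Cohen-Macaulay, deduce $\mathrm{depth}\,\mathrm{gr}_\M(R)>0$ (equivalently $s(\M)=1$), and then conclude via Theorem~\ref{CD-thm}. The paper's version is terser, pointing back to the discussion after Example~4.4 rather than spelling out the final computation, but the argument is the same.
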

\begin{proof} For such $R$, the ring ${\rm gr}_{\M}(R)$ is Cohen-Macaulay (see \cite[Theorem 2]{Sally}), hence its depth is equal to ${\rm dim}\,R\geq 2>0$. In this case, as we already know, the conjecture holds true.
\qed
\end{proof}

\section{Application: Regular local rings}

In this section, we provide new characterizations of regular local rings and describe a potential approach to the long-standing Zariski-Lipman conjecture about derivations.

\subsection{Characterizations of regular local rings} Our result in this part is Theorem \ref{charact2} below. We observe that the implication {\rm (a)} $\Rightarrow$ {\rm (d)} recovers \cite[Corollary 3.11]{CD}. Moreover, a crucial fact here (which, as far as we know, is new) is given by {\rm (b)} $\Rightarrow$ {\rm (a)}, i.e., $(R, \M)$ must be regular if $\M$ is generated by a $d$-sequence (see \cite{H1}, \cite{H2}), which in particular solves the problem suggested in \cite[Remark 3.12]{CD}. Finally, the equivalence between assertions (a) and (e) reveals the curious role played by ${\mathfrak d}_3(I)$ in regard to the theory of regular local rings, which can be re-expressed by means of equivalence to the structural assertion (f). 

We recall, for completeness, that a sequence of elements $x_1, \ldots, x_m\in R$ is said to be a $d$-sequence if no element of the sequence lies in the ideal generated by the others and, in addition, there are equalities $0 :_Rx_{1}x_j = 0 :_Rx_j$ for $j=1, \ldots, m$ and $(x_1, \ldots, x_i) :_Rx_{i+1}x_j = (x_1, \ldots, x_i) :_Rx_j$ for $i=1, \ldots, m-1$ and $j=i+1, \ldots, m$. This holds, e.g., if $x_1, \ldots, x_m$ is a regular sequence, but there are plenty of examples of $d$-sequences that are not regular.

For the proof, the following two interesting facts will be useful.

\begin{Lemma}{\rm (\cite[Corollary 5.2]{Trung})} \label{d-seq} Let $(R, \M)$ be a local ring with infinite residue field. Then, $\M$ is generated by a $d$-sequence if and only if $\mathrm{reg}_{\mathcal{R}(\M)} \mathcal{R}(\M) = 0$.
    
\end{Lemma}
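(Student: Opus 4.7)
My plan is to deduce the equivalence by appealing to Huneke's work on $d$-sequences and the Herzog-Simis-Vasconcelos theory of approximation complexes, combined with the Koszul-homology characterization of Castelnuovo-Mumford regularity that was already used in the proof of Theorem~\ref{antonino1.3}.

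\emph{Forward direction.} First, I would assume $\M=(x_1,\ldots,x_n)$ where $x_1,\ldots,x_n$ is a $d$-sequence, and note that such a sequence is automatically a minimal generating set so $n$ equals the embedding dimension. By Huneke's foundational result on $d$-sequences, the natural surjection $\mathrm{Sym}_R(\M)\twoheadrightarrow\mathcal{R}(\M)$ is then an isomorphism (so $\M$ is of linear type), and the approximation complex $\mathcal{M}_\bullet(x_1,\ldots,x_n)$ is acyclic. This provides a resolution of $\mathcal{R}(\M)$ over the polynomial ring $P=R[T_1,\ldots,T_n]$ whose $i$-th term is generated in $T$-degree $i$. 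Computing local cohomology with respect to $\mathcal{R}(\M)_+$ via this ``linear'' resolution then yields $a_i(\mathcal{R}(\M))+i\le 0$ for all $i\ge 0$, so $\mathrm{reg}_{\mathcal{R}(\M)}\mathcal{R}(\M)\le 0$; equality holds since $\mathcal{R}(\M)_0=R\neq 0$.

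\emph{Backward direction.} Conversely, assume $\mathrm{reg}_{\mathcal{R}(\M)}\mathcal{R}(\M)=0$. Exploiting that $K$ is infinite, I would pick a minimal generating set $x_1,\ldots,x_n$ of $\M$ in sufficiently general position, with associated degree-one generators $T_1,\ldots,T_n$ of the irrelevant ideal $\mathcal{R}(\M)_+$. The Koszul-homology formulation of regularity reads
\[
0\ =\ \max\{\,j-i \;:\; H_i(T_1,\ldots,T_n;\mathcal{R}(\M))_j\ne 0\,\},
\]
so each Koszul homology module in homological degree $i$ is concentrated in internal degree at most $i$. I would then translate this vanishing back into presentation data for $\mathcal{R}(\M)$ over $R[T_1,\ldots,T_n]$, extracting two facts: the defining ideal is generated by forms linear in the $T_j$, and the approximation complex on $x_1,\ldots,x_n$ is acyclic. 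By Huneke's criterion (as refined by Trung), the conjunction of these two properties is precisely the $d$-sequence condition on $x_1,\ldots,x_n$.

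\emph{Main obstacle.} The harder direction is clearly $(\Leftarrow)$: promoting the numerical vanishing of Koszul homology to the full combinatorial $d$-sequence property. The bridge is the acyclicity criterion for the approximation complex applied to a generic choice of minimal generators, which is exactly where both Huneke's theory and the infinite residue field hypothesis enter crucially; the forward direction, by contrast, is a fairly direct computation once one has Huneke's structural theorem at hand.
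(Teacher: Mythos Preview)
The paper does not give its own proof of this lemma: it is simply quoted as \cite[Corollary 5.2]{Trung} and used as a black box in the proof of Theorem~\ref{charact2}. There is therefore nothing in the paper to compare your argument against.

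On the substance of your sketch: the forward direction is fine and is essentially the Huneke/Herzog--Simis--Vasconcelos picture. The backward direction, however, has a real gap. From $\mathrm{reg}_{\mathcal{R}(\M)}\mathcal{R}(\M)=0$ and the Koszul description you correctly obtain that $H_1(T_1,\ldots,T_n;\mathcal{R}(\M))$ sits in degree at most $1$, hence the defining ideal of $\mathcal{R}(\M)$ in $R[T_1,\ldots,T_n]$ is generated by linear forms, i.e.\ $\M$ is of linear type. But linear type alone does \emph{not} imply that the chosen generators form a $d$-sequence; you still need the stronger colon conditions of Huneke's definition. You propose to recover these by also extracting acyclicity of the $\mathcal{M}$-approximation complex from the regularity hypothesis, yet you give no mechanism for this step, and it is precisely the crux: the Koszul homology you control is that of $T_1,\ldots,T_n$ on $\mathcal{R}(\M)$, whereas acyclicity of $\mathcal{M}_\bullet$ concerns the Koszul homology of $x_1,\ldots,x_n$ on $R$, and the passage between the two is not formal. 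Trung's actual argument in \cite{Trung} bypasses approximation complexes entirely on this side: he characterises $\mathrm{reg}\,\mathcal{R}(I)$ through filter-regular sequences and associated reduction/relation-type invariants, and then identifies the vanishing case with Huneke's colon conditions directly. If you want to salvage your route, you would need to supply that missing bridge explicitly.
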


\begin{Lemma}{\rm (\cite[Theorem 1.1]{Matsuoka})} \label{param} Let $(R, \M)$ be a local ring with ${\rm depth}\,R>0$. If $Q$ is a parameter ideal of $R$ such that $Q^n$ is $\M$-full for some $n\geq 1$, then $R$ is regular.
    
\end{Lemma}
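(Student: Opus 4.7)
The plan is to exploit the Rees property of $\M$-full ideals: if $J$ is $\M$-full and $L\supseteq J$ is an ideal with $\ell(L/J)<\infty$, then $\mu(L)\le\mu(J)$, where $\mu$ denotes the minimal number of generators. Since $Q$ is generated by a system of parameters $x_1,\dots,x_d$ (where $d=\dim R$), its $n$-th power is generated by the monomials of degree $n$ in the $x_i$, so $\mu(Q^n)\le\binom{n+d-1}{d-1}$.

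Next, I would apply the Rees property with $L=\M^n\supseteq Q^n$, which has finite colength over $Q^n$ since both are $\M$-primary. This would yield
\[
H_R(n)\;=\;\mu(\M^n)\;\le\;\mu(Q^n)\;\le\;\binom{n+d-1}{d-1},
\]
where $H_R(n)=\dim_K(\M^n/\M^{n+1})$ is the Hilbert function of $\mathrm{gr}_\M(R)$. Invoking the classical lower bound $H_R(n)\ge\binom{n+d-1}{d-1}$ --- available when $R$ is Cohen--Macaulay, via Abhyankar's inequality --- would force equality, and a standard argument using the canonical surjection from $K[X_1,\dots,X_e]$ (with $e=\mu(\M)$ the embedding dimension) onto $\mathrm{gr}_\M(R)$ then gives $e=d$, i.e., $R$ is regular.

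The main obstacle is that the lemma assumes only $\mathrm{depth}\,R>0$, not the Cohen--Macaulay condition required for Abhyankar's lower bound. An alternative strategy avoiding this gap would be to induct on $n$, aiming to show that $\M$-fullness of $Q^n$ descends to $\M$-fullness of $Q^{n-1}$, and ultimately to $Q$ itself. Once $Q$ is known to be $\M$-full, Rees applied to $L=\M\supseteq Q$ directly yields $\mu(\M)\le\mu(Q)=d$, and hence regularity. Establishing this descent from $Q^n$ to $Q^{n-1}$ is what I expect to be the delicate technical heart of the argument: it would presumably require a careful exploitation of the defining colon equality $Q^n\M:x=Q^n$ (for some $x\in\M\setminus\M^2$) together with the specific structure of parameter ideals, perhaps by manipulating auxiliary colons of the form $Q^n\M:xy=Q^n:y$ to peel off a factor of $Q$.
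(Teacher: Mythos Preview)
The paper does not supply its own proof of this lemma; it is quoted verbatim as \cite[Theorem~1.1]{Matsuoka} and used as a black box in the proof of Theorem~\ref{charact2}. There is therefore nothing in the paper to compare your argument against.

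Regarding your first strategy, there is a gap beyond the one you flag. The inequality $H_R(n)\ge\binom{n+d-1}{d-1}$ in fact holds for any local ring with infinite residue field, without Cohen--Macaulayness: a linear Noether normalization $K[z_1,\dots,z_d]\hookrightarrow\mathrm{gr}_\M(R)$ already furnishes it (this is not Abhyankar's inequality, which relates multiplicity to embedding dimension). The genuine problem is your final step. The canonical surjection $K[X_1,\dots,X_e]\twoheadrightarrow\mathrm{gr}_\M(R)$ produces only the \emph{upper} bound $H_R(n)\le\binom{n+e-1}{e-1}$, which is perfectly compatible with $H_R(n)=\binom{n+d-1}{d-1}$ for every $e\ge d$; it does not force $e=d$. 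Indeed, without the hypothesis $\mathrm{depth}\,R>0$ the desired implication is false: for $R=K[\![z_1,z_2,w]\!]/(w^2,wz_1,wz_2)$ one has $d=2$, $e=3$, and $H_R(2)=3=\binom{3}{1}$, yet $R$ is not regular. So an additional argument genuinely using positive depth is needed here, and you have not supplied one.

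Your second strategy---reduce to $n=1$ by showing that $\M$-fullness of $Q^n$ descends to $Q$, and then apply the Rees property to $Q\subseteq\M$ to obtain $\mu(\M)\le\mu(Q)=d$---is the correct route and is essentially the line taken in Matsuoka's paper. You have accurately located where the work lies (the descent step), but you have not carried it out; what you have written is a plan rather than a proof.
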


\begin{Theorem}\label{charact2}
     Let $(R,\M,K)$ be either a local ring or a standard graded $K$-algebra, with $K$ infinite and $\mathrm{depth}\,R > 0$. The following assertions are equivalent:
     \begin{itemize}
         \item[(a)] $R$ is regular;
         \item[(b)] $\M$ is generated by a $d$-sequence;
         
         \item[(c)] $s(\M) = 1$ and ${\rm r}_{I}(\M) = 0$, \  for any {\rm (}minimal{\rm )} reduction $I$ of $\M$;
         \item[(d)] ${\mathfrak d}_1(I) = {\mathfrak d}_2(I) = {\mathfrak d}_3(I) = 0$, \ for any {\rm (}minimal{\rm )} reduction $I$ of $\M$;
     \item[(e)] ${\mathfrak d}_3(I) = 0$, \ for any {\rm (}minimal{\rm )} reduction $I$ of $\M$;
     \item[(f)] $\mathcal{R}(\M, I) =  (\mathcal{R}(\M, I)_{\geq 1} :_{\mathcal{R}(R)} \M)(1)$, \ for any {\rm (}minimal{\rm )} reduction $I$ of $\M$.
     \end{itemize}
     \end{Theorem}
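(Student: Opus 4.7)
My plan is to establish the chain (a) $\Rightarrow$ (b) $\Rightarrow$ (c) $\Rightarrow$ (d) $\Rightarrow$ (e) $\Rightarrow$ (a) together with the equivalence (e) $\Leftrightarrow$ (f), so that all six assertions are mutually equivalent.

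The implication (a) $\Rightarrow$ (b) is immediate, since a regular sequence generating $\M$ is in particular a $d$-sequence. For (b) $\Rightarrow$ (c), I would apply Lemma \ref{d-seq} to conclude that $\mathrm{reg}_{\mathcal{R}(\M)}\mathcal{R}(\M)=0$; the Rossi--Dinh--Trung inequality $s(\M)\leq \max\{1,\mathrm{reg}_{\mathcal{R}(\M)}\mathcal{R}(\M)\}$ already exploited in the proof of Theorem \ref{antonino1.3} then forces $s(\M)=1$, and for any minimal reduction $I$ of $\M$, Theorem \ref{antonino1.3} combined with the equality case of Lemma \ref{lemareg} yields
$$
\mathfrak{d}_3(I)\ \leq\ \mathrm{reg}_{\mathcal{R}(\M)}\mathcal{R}(\M,I)\ =\ \mathrm{reg}_{\mathcal{R}(\M)}\mathcal{R}(\M)\ =\ 0,
$$
from which $r_I(\M)=0$ by Theorem \ref{CD-thm}. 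The step (c) $\Rightarrow$ (d) is then a direct application of Theorem \ref{CD-thm} (which gives $\mathfrak{d}_3(I)=\max\{r_I(\M),s(\M)-1\}=0$) together with the basic relations $\mathfrak{d}_2(I)\leq\mathfrak{d}_1(I)=\mathfrak{d}_3(I)$, and (d) $\Rightarrow$ (e) is trivial.

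The conceptual heart is (e) $\Rightarrow$ (a). Given $\mathfrak{d}_3(I)=0$ for a minimal reduction $I$, the relation $\mathfrak{d}_1(I)=\mathfrak{d}_3(I)$ gives $\mathfrak{d}_1(I)=0$, so that $I=I\M^0$ is itself $\M$-full. Since $K$ is infinite and $\M$ is maximal, the minimal reduction $I$ satisfies $\mu(I)=\ell(\M)=\dim R$ together with $\sqrt{I}=\M$, so $I$ is a parameter ideal. Lemma \ref{param} applied with $n=1$ then shows that $R$ is regular.

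Finally, for (e) $\Leftrightarrow$ (f), I would use the short exact sequence
$$
0\ \longrightarrow\ \mathcal{R}(\M,I)\ \longrightarrow\ \mathcal{P}_\M(I)(1)\ \longrightarrow\ \mathfrak{D}_\M(I)\ \longrightarrow\ 0
$$
of finitely generated graded $\mathcal{R}(\M)$-modules extracted from the commutative diagram displayed earlier. The vanishing $\mathfrak{d}_3(I)=0$ is equivalent to $\mathfrak{D}_\M(I)=0$, which is exactly the statement that the inclusion $\mathcal{R}(\M,I)\hookrightarrow \mathcal{P}_\M(I)(1)$ is an equality, i.e., (f). The main obstacle is in the step (e) $\Rightarrow$ (a), since Lemma \ref{param} is formulated only for local rings; in the standard graded case this should be addressed by passing to the homogeneous localization at $\M$ and verifying that the hypothesis $\mathfrak{d}_3(I)=0$ descends, thereby reducing to the local setting where Matsuoka's result applies directly.
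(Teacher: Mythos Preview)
Your proposal is correct and follows essentially the same cycle of implications as the paper, including the key step (e) $\Rightarrow$ (a) via $\mathfrak{d}_1(I)=\mathfrak{d}_3(I)=0$, $\M$-fullness of a parameter reduction, and Lemma~\ref{param}. The only noteworthy difference is in (b) $\Rightarrow$ (c): the paper bounds $r_I(\M)$ directly by invoking $\mathrm{reg}_{\mathcal{R}(\M)}\mathcal{R}(\M)\geq r_I(\M)$ (from \cite{Rossi-Dinh-Trung} and \cite{Rossi-Valla}), whereas you route through $\mathfrak{d}_3(I)\leq \mathrm{reg}_{\mathcal{R}(\M)}\mathcal{R}(\M,I)=\mathrm{reg}_{\mathcal{R}(\M)}\mathcal{R}(\M)=0$ and then read off $r_I(\M)=0$ from Theorem~\ref{CD-thm}; both are valid, yours stays internal to the paper's machinery while the paper's is slightly more direct and avoids invoking Theorem~\ref{antonino1.3}. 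Your treatment of (e) $\Leftrightarrow$ (f) via the short exact sequence is equivalent to the paper's componentwise argument, and your remark about reducing the graded case to the local one for Lemma~\ref{param} is a reasonable caution (the paper does not comment on it).
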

     \begin{proof} {\rm (a)} $\Rightarrow$ {\rm (b)} If $R$ is regular, then $\M$ is generated by a regular sequence, which is therefore a
$d$-sequence.

         {\rm (b)} $\Rightarrow$ {\rm (c)} According to \cite[Theorem 2.1(ii)]{Rossi-Dinh-Trung}, we have ${\rm max}\{\mathrm{reg}_{\mathcal{R}(\M)} \mathcal{R}(\M), 1\} \geq s(\M)$. Hence, using Lemma \ref{d-seq}, we obtain $s(\M)=1$. In order to deal with ${\rm r}_{I}(\M)$, consider first the case where the reduction $I$ is minimal. Applying \cite[Theorem 1.3]{Rossi-Dinh-Trung} and \cite[p.\,12]{Rossi-Valla}, we derive $$\mathrm{reg}_{\mathcal{R}(\M)} \mathcal{R}(\M) \geq {\rm r}_{I}(\M),$$ which gives ${\rm r}_{I}(\M)=0$. Now, if $I$ is not minimal, then it necessarily contains a minimal reduction $J$ of $\M$, so that ${\rm r}_{I}(\M)\leq {\rm r}_{J}(\M)=0$.

          {\rm (c)} $\Rightarrow$ {\rm (d)}  This follows readily from the fact (proved in \cite[Theorem 3.4]{CD}) that $${\mathfrak d}_3(I) \leq \mathrm{max}\{{\rm r}_{I}(\M), s(\M) - 1\}.$$ Now, by \cite[Proposition 2.2]{CD}, the vanishing of ${\mathfrak d}_3(I)$ forces that of  ${\mathfrak d}_1(I)$  and ${\mathfrak d}_2(I)$. 

 {\rm (d)} $\Rightarrow$ {\rm (e)} This is obvious.

          {\rm (e)} $\Rightarrow$ {\rm (a)} In the present setting, $\M$ admits a reduction $Q$ which is a parameter ideal (see, e.g., \cite[Exercise 8.11(ii)]{H-S}). As ${\mathfrak d}_3(Q)= {\mathfrak d}_1(Q)$, we obtain $${\mathfrak d}_1(Q) = 0$$ and consequently $Q$ is $\M$-full. We are now in a position to apply Lemma \ref{param} with $n=1$ to conclude  that $R$ is regular.  

          It remains to prove that assertions (a) and (f) are equivalent. To this end, simply note that (f) holds if and only if $(\mathcal{R}(\M, I))_k =  (\mathcal{R}(\M, I)_{k \geq 1} :_{\mathcal{R}(R)} \M)(1)_k$ for all $k \geq 0$ and any {\rm (}minimal{\rm )} reduction $I$ of $\M$, which means $$I\M^{k} = I\M^{k + 1} : \M \quad \mbox{for\, all} \quad k \geq 0.$$ This is, by definition, tantamount to ${\mathfrak d}_3(I) = 0$, which as we have shown above is equivalent to $R$ being regular.
          \qed
     \end{proof}

\medskip 

The equivalence {\rm (a)} $\Leftrightarrow$ {\rm (e)} of Theorem \ref{charact2} leaves the following natural problem.

\begin{Question}\rm
 Let $(R, \M , K)$ be either a local ring or a standard graded $K$-algebra, with $K$ infinite and $\mathrm{depth}\,R > 0$. What kind of rings could be characterized
by the property that ${\mathfrak d}_3(I) \leq 1$ for all minimal reductions $I$ of $\M$?
\end{Question}

Though quite vaguely and having no indication of proof, the authors suspect that the answer to the question should rely in the nature of the singular locus of $R$, for instance, the situation where $R$ has at most an isolated singularity at the origin (the case of rational singularities should not be ruled out either).

\subsection{Potential approach to a classical conjecture}

For a field $K$ and a $K$-algebra $R$, we write as usual ${\rm Der}_K(R)$  for the module of $K$-derivations of $R$, i.e., the additive maps $D\colon R\to R$ that vanish on $K$ and satisfy Leibniz rule:  $D(\alpha \beta)=\alpha D(\beta)+\beta D(\alpha)$ for all $\alpha, \beta \in R$. Now assume that $R$ is a positive-dimensional local ring which is either
\begin{equation}\label{ZL}
{K}[x_1, \ldots, x_m]_{\mathfrak q}/I \ \ \ ({\mathfrak q}\in {\rm Spec}\,{K}[x_1, \ldots, x_m])\quad \mbox{or} \quad {K}[\![x_1, \ldots, x_m]\!]/I,\end{equation} with $I$ a proper radical ideal and $x_1, \ldots, x_m$ indeterminates over a field ${K}$ of characteristic zero. In this setting, there is the following long-held classical problem.

\begin{Conjecture}\label{ZLC} {\rm (Zariski-Lipman)} Let $R$ be as in {\rm (\ref{ZL})}. If ${\rm Der}_K(R)$ is free, then $R$ is regular.

\end{Conjecture}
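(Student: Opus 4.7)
The plan is to leverage the new regularity criterion in Theorem \ref{charact2}: $R$ is regular if and only if $\mathfrak{d}_3(I)=0$ for some (equivalently any) minimal reduction $I$ of $\M$. In the setting of (\ref{ZL}), after (if needed) extending scalars so that the residue field becomes infinite --- an operation that preserves both regularity and the freeness of $\der_K(R)$ --- we may pick a parameter ideal $Q=(a_1,\ldots,a_d)$ that is a minimal reduction of $\M$. By the equivalence (a)$\Leftrightarrow$(e) of Theorem \ref{charact2}, or alternatively by a direct appeal to Lemma \ref{param} with $n=1$, the Zariski--Lipman conjecture reduces to proving that $Q$ is $\M$-full whenever $\der_K(R)$ is free.

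Next I would exploit the freeness of $\der_K(R)$. Since $R$ is essentially of finite type (or complete) over a field of characteristic zero and is generically smooth along its minimal primes, $\der_K(R)\cong\Hom_R(\Omega_{R/K},R)$ is reflexive of generic rank $d=\dim R$, so freeness produces a basis $D_1,\ldots,D_d$. A Nakayama-type adjustment of this basis would allow me to assume a transversality condition of the form $D_i(a_j)\equiv\delta_{ij}\pmod{\M}$, so that the $D_i$ form a system of coordinate derivations for the chosen parameters.

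The technical core of the argument is to use these derivations to test the equality $Q\M:\M=Q$. Given $y\in Q\M:\M$ and any $x\in\M\setminus\M^2$, one has $xy\in Q\M$, and applying each $D_i$ yields
$$xD_i(y)+yD_i(x)\ =\ D_i(xy)\ \in\ D_i(Q\M)\ \subseteq\ Q+\M.$$
Combined with transversality of the $D_i$'s and Nakayama's lemma, this should push $y$ modulo $Q$ into arbitrarily high powers of $\M$, forcing $y\in Q$ by Krull's intersection theorem; this gives weak $\M$-fullness of $Q$, hence $\mathfrak{d}_3(Q)=0$, hence regularity.

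The main obstacle is exactly the passage from the infinitesimal information (congruences modulo successive powers of $\M$) to the genuine membership $y\in Q$, which requires a delicate interaction between the derivation module and the Koszul-type syzygies of $Q$ inside $\M$. This gap is precisely where the Zariski--Lipman conjecture has resisted proof in full generality, being settled only in restricted settings such as complete intersections, graded algebras, low embedding codimension, and isolated quotient singularities (Lipman, Scheja--Storch, Hochster, Platte, K\"allstr\"om, and others). The novelty of the approach here is the reformulation of regularity as the single numerical vanishing $\mathfrak{d}_3(Q)=0$, which translates the conjecture into the asymptotic behaviour of the product ideals $Q\M^k$ --- a framework in which the regularity bounds of Section 3 and the reduction-number techniques of Section 4 can potentially be brought to bear on a classical open problem.
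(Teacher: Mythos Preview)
The statement in question is a \emph{conjecture}, and the paper does not prove it. What the paper offers is a Remark sketching a heuristic approach: the authors \emph{guess} that freeness of $\der_K(R)$ should allow one to build, for each minimal reduction $I$ of $\M$, an element $\ell^{(I)}=\sum_j\beta_j^{(I)}D_j(\alpha_j^{(I)})\in\M\setminus\M^2$ witnessing the $\M$-fullness of every $I\M^k$, hence $\mathfrak{d}_1(I)=\mathfrak{d}_3(I)=0$, and then Theorem \ref{charact2} would conclude. No verification of this guess is given.

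Your proposal is likewise not a proof, and to your credit you say so explicitly: the passage from the congruence information produced by the Leibniz rule to the genuine membership $y\in Q$ is left as an ``obstacle'', and you correctly identify this as the point where the conjecture has historically resisted attack. So there is no disagreement to adjudicate --- both the paper and you present a strategy, not an argument.

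The two strategies differ in emphasis. The paper aims at $\M$-fullness by manufacturing a single colon-witness $\ell^{(I)}$ out of the free basis of derivations; you instead aim at weak $\M$-fullness $Q\M:\M=Q$ directly, by differentiating the relation $xy\in Q\M$ and trying to bootstrap modulo successive powers of $\M$. Your line of attack is more concrete (it specifies what computation to attempt), but note that your displayed inclusion $D_i(Q\M)\subseteq Q+\M$ is automatic since $Q\M\subset\M$ already, so as written it carries no information; one would need the sharper $D_i(Q\M)\subseteq Q$ or a filtration-by-$\M$-adic-order argument, and that sharpening is exactly the unresolved core. In short: your proposal is a reasonable reformulation in the same spirit as the paper's Remark, but neither constitutes a proof, and the paper does not claim one.
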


This problem has an interesting long history, featuring in particular a strong geometric counterpart, and remains open in some cases. For  details and references, see \cite[Section 2]{H} (in particular, see \cite[Theorem 2.3]{H} for a simple proof of the Zariski-Lipman conjecture in the graded case). Additionally,  \cite[Section 4]{Cleto} shows a relation between this conjecture and the $\M$-full property of ideals in the (open) two-dimensional local case, which makes it somewhat natural to expect further connections.

We point out that, under the hypotheses of the conjecture, $R$ must be a (normal) domain, and we can write $${\rm Der}_K(R)=\bigoplus_{1\leq j\leq t}RD_j ~~ \cong R^t, \quad t={\rm dim}\,R,$$ for some free basis $\{D_j\}$  consisting of precisely $t$ derivations. Conversely, if ${\rm Der}_K(R)$ admits a free basis $\{D_1, \ldots, D_s\}$, then necessarily $s=t$.

\begin{Remark}\rm Let $(R, \M)$ be as in {\rm (\ref{ZL})}. If, as above, the $R$-module  ${\rm Der}_K(R)$ is free, then our guess is that, for each minimal reduction $I$ of $\M$, there exist $\alpha_j^{(I)}, \beta_j^{(I)}\in R$, $j=1, \ldots, t$, such that the element given by 
$${\ell}^{(I)}=\sum_{j=1}^t\beta_j^{(I)}D_j(\alpha_j^{(I)})$$ satisfies ${\ell}^{(I)}\in \M \setminus {\M}^2$ and $I{\M}^{k+1}\colon ({\ell}^{(I)})=I{\M}^k$ for all  $k\geq 0$. This would confirm the validity of Conjecture \ref{ZLC}, because such conditions yield $I{\M}^k$ to be $\M$-full for all $k\geq 0$, which means $\mathfrak{d}_1(I)=0$ and hence, as we already know, $\mathfrak{d}_3(I)=0$. Now, Theorem \ref{charact2} ensures that $R$ is regular.
\end{Remark}

\bigskip

\noindent{\bf Acknowledgements.} The first-named author was partly supported by the Grant JDC2023-051705-I funded by
MICIU/AEI/10.13039/501100011033 and by the FSE+. The second-named author was partially supported by CNPq (grants 406377/2021-9 and 313357/2023-4). The authors are grateful to the referee for a number of helpful comments and suggestions, and for pointing out an inaccuracy in an earlier proof of Theorem \ref{antonino1.3}.


\begin{thebibliography}{99}

\bibitem{B-S} M. Brodmann, R. Sharp, {\it Local Cohomology: An Algebraic Introduction with Geometric Applications}, Cambridge Univ. Press, Cambridge, 1998.

\bibitem{B-C-R-V} W. Bruns, A. Conca, C. Raicu, M. Varbaro, {\it Determinants, Gr\"obner bases and cohomology}, Springer, 2022.

\bibitem{C-P-R} A. Corso, C. Polini, M. E. Rossi, {\it  Depth of associated graded rings via Hilbert coefficients of ideals}, J. Pure Appl. Algebra \textbf{201} (2005), 126 - 141.

\bibitem{Dao} H. Dao, {\it On colon operations and special types of ideals}, Palestine J. Math. \textbf{10} (2021), 383-388.

\bibitem{Eisenbud} D. Eisenbud, {\it Commutative Algebra with a View toward Algebraic Geometry}, Springer-Verlag, 1995.

\bibitem{Ficarra} A. Ficarra, {\it Dao numbers and the asymptotic behaviour of fullness}, 2024, preprint arxiv.org/abs/2402.05555.

\bibitem{GH} S. Goto, F. Hayasaka, {\it Finite homological dimension and primes associated to integrally closed ideals}, Proc. Amer. Math. Soc. {\bf 130} (2002),
3159-3164.

\bibitem{HRR} W. Heinzer, L. J. Ratliff,  D. E. Rush, {\it Basically full ideals in local rings}, J. Algebra {\bf 250} (2002), 371-396.

\bibitem{H} J. Herzog, {\it The module of differentials}, Lecture notes, Workshop on Commmutative Algebra and its Relation to Combinatorics and Computer Algebra, International Centre for Theoretical Physics (Trieste, Italy, 1994).

\bibitem{HHBook} J. Herzog, T. Hibi, {\it Monomial ideals}, Graduate texts in Mathematics {\bf 260}, Springer, 2011.

\bibitem{HT} L. T. Hoa, N. D. Tam, {\it On some invariants of a mixed product of ideals}, Arch. Math. {\bf 94} (2010), 327-337.

\bibitem{H1} C. Huneke, {\it On the symmetric and Rees algebras of an ideal generated by a $d$-sequence},
J. Algebra {\bf 62} (1980), 268-275.

\bibitem{H2} C. Huneke, {\it The theory of $d$-sequences and powers of ideals}, Adv. Math. {\bf 46} (1982), 249-279.

\bibitem{H-S}{C. Huneke, I. Swanson}, {\it Integral Closure of Ideals, Rings and Modules}, London Math. Soc. Lecture Note Ser. {\bf 336}, Cambridge Univ. Press, 2006.


\bibitem{Mafi} A. Mafi, {\it  Ratliff-Rush ideal and reduction numbers}, Comm. Algebra \textbf{46} (2018), 1272--1276.

\bibitem{Matsuoka} N. Matsuoka, {\it On $\M$-full powers of parameter ideals}, Tokyo J. Math. \textbf{29} (2006), 405-411.

\bibitem{Cleto}{C. B. Miranda-Neto}, {\it Free logarithmic derivation modules over factorial domains}, Math. Res. Lett. {\bf 24} (2017), 153-172.

\bibitem{CD} C. B. Miranda-Neto, D. S. Queiroz, {\it Dao's question on the asymptotic behaviour of fullness},
2023, preprint arxiv.org/abs/2308.03997.

\bibitem{Ratliff-Rush} L. J. Ratliff, D. E. Rush, {\it Two notes on reductions of ideals}, Indiana Univ. Math. J. \textbf{27} (1978), 929-934.

\bibitem{Rossi-Swanson} M. E. Rossi, I. Swanson, {\it Notes on the behaviour of the Ratliff-Rush filtration}, Contemp. Math. \textbf{331} (2003), 313-328.

\bibitem{Rossi-Dinh-Trung} M. E. Rossi, D. T. Trung, N. V. Trung, {\it Castelnuovo–Mumford regularity and Ratliff–Rush closure}, J. Algebra \textbf{504} (2018), 568–-586.

\bibitem{Rossi-Valla} M. E. Rossi, G. Valla, {\it Hilbert functions of filtered modules}, UMI Lect. Notes 9, Springer, 2010.

\bibitem{Sally} J. D. Sally, {\it On the associated graded ring of a local
Cohen-Macaulay ring}, J. Math. Kyoto Univ. {\bf 17} (1977), 19--21.

\bibitem{Trung} N. V. Trung, {\it The Castelnuovo regularity of the Rees algebra and the associated graded ring}, Trans. Amer. Math. Soc. \textbf{350} (1998), 2813--2832.

\bibitem{Watanabe1} J. Watanabe, {\it $\M$-full ideals}, Nagoya Math. J.  \textbf{106} (1987), 101-111.

\bibitem{Za} N. Zamani, {\it Regularity of the Rees and Associated Graded Modules}, Eur. J. Pure Appl. Math. \textbf{7} (2014), 429-436.

\end{thebibliography}
\end{document}